\documentclass[11pt]{article}
\usepackage[utf8]{inputenc}
 
\title{Differentiating the State Evaluation Map from Matrices to Functions on Projective Space}
\author{Ghaliah Alhamzi$^1$ \& Edwin Beggs$^2$ }
\date{%
	$^1$Department of Mathematics and Statistics, College of Science, \\ Imam Mohammad Ibn Saud Islamic University (IMSIU), Riyadh, Saudi Arabia \\
		$^2$College of Science, Swansea University, Wales 
	\\[2ex]%
}

\usepackage{yfonts}
\usepackage{amsmath}
\usepackage{amsfonts}
\usepackage{amssymb}
\usepackage{amsthm}
\usepackage{mathrsfs}
\usepackage{bm}
\usepackage[pdftex]{graphicx,color}
\usepackage{graphics,color}
\usepackage{epstopdf}
\usepackage{graphicx}
\usepackage{textcomp}
\usepackage{pgfplots, pgfplotstable}
\usepackage[font=small,labelfont=bf,labelsep=colon]{caption}
\usepackage{pdfpages}
\usepackage{fullpage}
\usepackage{calligra} 
\usepackage[all]{xypic}
\usepackage{float}
\usepackage{subcaption} 
\usepackage[normalem]{ulem}
 \usepackage{amsmath,amsfonts,amsthm,amssymb}
\usepackage{cancel}

\newtheorem{theorem}{Theorem}[section]
\newtheorem{lemma}[theorem]{Lemma}
\newtheorem{cor}[theorem]{Corollary}
\newtheorem{proposition}[theorem]{Proposition}

\theoremstyle{remark}

\newtheorem{definition}[theorem]{Definition}

\theoremstyle{definition}

\setlength{\unitlength}{10mm}

\newcommand{\id}{\mathrm{id}}

\newcommand{\extd}{\mathrm{d}}
\newcommand{\tens}{\mathop{\otimes}}

\newcommand{\C}{\mathbb{C}}

\newcommand{\mathsym}[1]{{}}
\newcommand{\unicode}[1]{{}}

\newcommand{\di}{\,\mathrm{d}}

\begin{document}
 
\maketitle
 
 \abstract{We show that the pure state evaluation map from $ M_{n}(\C) $ to $ C(\C \mathbb{P}^{n-1}) $ (a completely positive map of $ C^{*} $-algebras) extends to a cochain map from the universal calculus on $ M_{n}(\C ) $ to the holomorphic $ \bar{\partial} $ calculus on $ \C \mathbb{P}^{n-1} $. The method uses connections on Hilbert $ C^{*} $-bimodules. This implies the existence of various functors, including one from $ M_{n}(\C) $ modules to holomorphic bundles on $ \C \mathbb{P}^{n-1} $. }
 	
 \section{Introduction}
 For a subset $ X $ of the state space $ S $ of a $ C^{*} $-algebra $ A $ we have a positive “state evaluation map” $ \delta : A \rightarrow C(X) $ given by $ \delta(a)(\phi) =\phi(a)$ for $ a\in A $ and $ \phi \in  X$. For $ M_{n }(\C) $ the result of Choi \cite{Choi} gave  the pure state space as $  \C \mathbb{P}^{n-1} $. We use the KSGNS construction \cite{Lance} to analyse the case $ A=M_{n}(\C) $ and $ X=\C \mathbb{P}^{n-1} $ and then consider the differentiability of the state evaluation map. To do this we begin by constructing the Hilbert $ C^{*} $-bimodule giving the state evaluation map. Then we use the methods of connections on bimodules to connect the differential structure on $ M_{n}(\C) $ (we take the universal calculus ) to that on $ \C \mathbb{P}^{n-1} $ (the usual calculus). Here we follow the methods in \cite{BMbook}, but then find that conditions required there do not apply, so in Section \ref{Se3} we consider a more general theory extending the results in \cite{BMbook}. As a result Proposition \ref{prop12} on an induced functor from left $ M_{n} $-modules to holomorphic bundles on $  \C \mathbb{P}^{n-1} $ is phrased in terms of holomorphic bundles rather than flat bundles on $  \C \mathbb{P}^{n-1} $. (For brevity we often refer to $ M_{n}(\C) $ just as $ M_{n} $.) Also our main result Theorem \ref{thm2} on extending the state evaluation map to a cochain map uses the $ \bar{\partial} $ calculus on projective space.
 
 The main reason why we chose to do this construction with $ M_{n} $ is the concrete construction of the state space. More generally, it might be possible to put a differential structure on the pure state space of a $ C^{*} $-algebra, even if we know little about the state space. For this one thing to remember is that there is a very general idea of calculus on infinite dimensional spaces \cite{Milnor} using directional derivatives. 
 
 Apart from the concrete description of the state space, another reason why we are interested in the calculi on matrix algebras and the link with representations and states is Connes' noncommutative derivation of the standard model \cite{AliAlConnes}. The fact that from a relatively simple noncommutative beginning involving matrices Connes constructs the standard model indicates that there probably something very interesting in the geometry of the initial noncommutative space. Most gauge  theories in physics are described in terms of calculi and so we are naturally led to questions about calculi on matrices and how they relate to states. 
 
 We use the notation that $ h_{i}\in \mathrm{Col}^{n}(\C) $ is the column vector with $ 1 $ in position $ i $ and zero elsewhere, and that $ E_{ij}\in M_{n}(\C) $ in the matrix with 1 in row $ i $ and column $ j $ and zero elsewhere. An element of $ \C \mathbb{P}^{n-1} $ is written in homogenous coordinates as $ [(v_{1} \dots v_{n})] $ where we suppose $ \sum|v_{i}|^{2}=1 $. We sum over repeated indices unless otherwise indicated.
 \section{Preliminaries } \label{Se1} 
 \subsection{Calculi and connections}
 \begin{definition}
 	Given a first order calculus $ (\Omega_{A}^{1} , \di ) $ on an algebra $ A $, the maximal prolongation calculus $ \Omega_{A} $ has relations $\sum\di c_{i} \wedge \di a_{i}=0 $ for every relation $ \sum c_{i}  \di a_{i}=0 $ on $ \Omega_{A}^{1} $, where $ c_{i}, a_{i} \in A $.
 \end{definition}
 \begin{definition}
 	The universal first order calculus $ \Omega^{1}_{\mathrm{uni}}(A) $ on a unital algebra $ A $ is defined by 
 	\[\Omega^{1}_{\mathrm{uni}}(A) = \ker \cdot : A \tens A \rightarrow  A\]
 	where $ \cdot $ is the algebra product and $ \di _{\mathrm{uni} }a =1 \tens a-a\tens 1 $.
 \end{definition}
The maximal prolongation of the universal calculus has $ \Omega_{\mathrm{uni}}^{n}(A) \subset A^{\tens  n+1} $ which is the intersection of all the kernels of the multiplication maps between neighbouring factors, i.e. 
\[\Omega_{\mathrm{uni}}^{2}(A)=\ker (\cdot \tens \id : A \tens  A \tens A  \rightarrow A \tens A ) \cap  \ker (\id\tens \cdot  : A \tens  A \tens A \rightarrow A \tens A ) \ . \]
We now assume that the unital algebras $ A $ and $ B $ have calculi $ \Omega_{A}^{n} $ and $ \Omega_{B}^{n} $ respectively. 
\begin{definition}\label{de2}
	A right connection $ \nabla_{E}:E\rightarrow E \tens_{B} \Omega_{B}^{1} $ on a right $ B $-module $ E $ is a linear map obeying the right Leibniz rule for $ e\in E $ and $ b\in B $
	\begin{equation}\label{nablaE}
		\nabla_{E} (e \,b)=  e \tens \extd b+\nabla_{E}\,(e).b\ .
	\end{equation}
\end{definition}
\begin{definition}
	Given the right connection $ (E,\nabla_{E}) $ in Definition \ref{de2} we define for $ n \geq 1 $
	\begin{equation*}
		\nabla_{E}^{[n]}: E \tens_{B} \Omega_{B}^{n} \rightarrow E \tens_{B} \Omega_{B}^{n+1}
	\end{equation*}
	by $ \nabla ^{[1]} _{E}=\nabla_{E}  $ and for $ n \geq 2 $
	\begin{equation*}
		\nabla_{E}^{[n]}(e \tens \xi)=\nabla_{E} \, e \wedge \xi + e \tens \di \xi \ .
	\end{equation*}
	The curvature of $ E $ is the right bimodule map 
	\[R_{E}= \nabla_{E}^{[1]} \nabla_{E} : E \rightarrow E \tens_{B} \Omega_{B}^{2}\]
	and then for $ e \tens \xi \in E \tens_{B} \Omega_{B}^{n} $
	\begin{equation*}
		\nabla_{E}^{[n+1]} \nabla_{E}^{[n]}(e \tens \xi)=R_{E} (e )\wedge \xi \ .
	\end{equation*}
\end{definition}
The idea of a bimodule connection was introduced in \cite{DVMic}, \cite{DVMass} and \cite{Mourad} and used in \cite{FioMad}, \cite{Madore}. It was used to construct connections on tensor products in \cite{BDMS} (see Proposition \ref{prop4}). 
\begin{definition}
	If $ E $ is an $ A $-$ B $ bimodule then $ (\nabla_{E}, \sigma_{E}) $ is a right bimodule connection where $ \nabla_{E} $ is a right connection and there is a bimodule map
	\[\sigma_{E}: \Omega_{A}^{1}\tens_{A} E \rightarrow E\tens_{B} \Omega_{B}^{1}\]
	so that 
	\begin{equation*}
		\nabla_{E}(a e)= \sigma_{E}(\extd a \tens e)+a. \nabla_{E}\, e\ .
	\end{equation*}
\end{definition}
\subsection{ Hilbert bimodules}
Note that, unlike most of the literature on Hilbert $ C^{*}$-modules, we explicitly use conjugate bundles and modules.  This is required to make the usual tensor products and connections work with inner products. Suppose that $ A $ and $ B $ are $ * $-algebras. For a left $ A $-module $ E, \bar{E} $ is the conjugate vector space with right $ A $-action $ \overline{e}.a=  \overline{a^{*}e}$, and for a right $ A $ module $ F $, $ \bar{F} $ is the conjugate vector space with left $ A $-action $ a. \overline{f}=\overline{f.a^{*}} $. For our $ A $-$ B $ module $ E $, $ \bar{E} $ is a $ B $-$ A $ bimodule with $ b \bar{e}= \overline{e b^{*}}$ and $ \bar{e} a=\overline{a^{*} e}$. 
\begin{definition}
	A differential calculus $ (\Omega_{A}, \di ) $ on a $ * $-algebra $ A $ is a $ * $-differential calculus if there are antilinear  operators $ *: \Omega_{A}^{n} \rightarrow  \Omega_{A}^{n} $ so that $ (\xi \wedge \eta)^{*}=(-1)^{|\xi||\eta|} \eta^{*} \wedge \xi^{*}$ where $ |\xi| $ is the degree of $ \eta $, i.e. $ \eta \in \Omega_{A}^{|\eta|} $ and $ (\di \xi) ^{*}=\di (\xi^{*})$.
\end{definition}
We now suppose that $ A $ and $ B $ have $ * $-calculi. Then for our right bimodule connection $ (\nabla_{E}, \sigma_{E}) $ we have a corresponding left bimodule connection $ (\nabla_{\bar{E}}, \sigma_{\bar{E}}) $ on $ \bar{E} $ given by $ \nabla_{\bar{E}} \bar{e}=\xi^{*} \tens \bar{f}$ where $ \nabla_{E} e=f\tens \xi $ (sum implicit) and $ \sigma_{\bar{E}}(\bar{e} \tens \eta)=k^{*}\tens \bar{g} $ where $ \sigma_{E}(\eta^{*} \tens e)=g\tens k $.

We give a definition of inner product on an $ A $-$ B $ bimodule $ E $, where $ A $ and $ B $ are $  *$-algebras. This is taken from the definition of  Hilbert bimodules in \cite{Lance}, omitting norms and completion as we will need smooth function algebras. Of course,  the  modules with inner product we will talk about have completions which really are Hilbert bimodules.
\begin{definition}
	$ A \, B $-valued inner product on an $A  $-$ B $ bimodule $ E $ is a $ B $-bimodule map $ \langle , \rangle: \bar{E} \tens_{A} E \rightarrow B $ obeying $ \langle\bar{e'}, e\rangle ^{*}=\langle\bar{e}, \bar{e'}\rangle$ for all $ e',e \in E $ (the Hermitian condition) and $ \langle \bar{e},e\rangle  \geq 0$ and $ \langle  \bar{e},e\rangle  = 0$ only where $ e=0 $.
\end{definition}
Given an inner product $ \langle,\rangle : \bar{E}\tens_{A} E\rightarrow B$ the right connection $ \nabla_{E}  $ preserves the inner product if 
\begin{equation}\label{eqn41}
	(\id \tens  \langle,\rangle )(\nabla_{\bar{E}} \tens \id)+(\langle,\rangle \tens \id)(\id \tens \nabla_{E})=\extd \langle,\rangle \ .
\end{equation}
 \subsection{Line bundles and calculus in  $ \C \mathbb{P}^{n-1} $ } \label{subsec1}
On $ \C \mathbb{P}^{n-1} $ we have homogenous coordinates $ v_{i}\in \C $ for $ 1\leq i \leq n $. We take $ \underline{v}=(v_{1},\dots, v_{n}) $ to lie on the sphere $ S^{2n-1} $ in $ \C^{n} $, i.e. $ \sum_{i} v_{i} \bar{v}_{i}=1 $. There is an action of the unit norm complex numbers $ U_{1} $ on $ S^{2n-1} $ by 
\begin{align*}
	z \triangleright (v_{1}, \dots, v_{n})= (z v_{1}, \dots ,zv_{n})\ .
\end{align*}
We define $ \mathbb{C} \mathbb{P}^{n-1}$ as $ S^{2n-1} $ quotiented by this circle action, identifying points $ z\triangleright \underline{v} \cong \underline{v}$ for all $ z \in U_{1} $. We use notation $ [\underline{v}] \in \C \mathbb{P}^{n-1}$ for the equivalence classes. We consider subsets of continuous functions on $ S^{2n-1} $, defining for integer $ m $
\begin{align*}
	C_{m}(\C \mathbb{P}^{n-1})= \{f \in C(S^{2n-1}): f(z \triangleright \underline{v})=z^{m} f(\underline{v})\quad \text{for all} \ z \in U_{1}, \underline{v}\in S^{2n-1}\}
\end{align*}
and similarly $ C^{\infty} _{m}(\C \mathbb{P}^{n-1}) $ to be smooth functions. Then $ C^{\infty} _{0}(\C \mathbb{P}^{n-1}) $ is the usual smooth functions on $ \C \mathbb{P}^{n-1} $.
There is an alternative view given by grading monomials in $ v_{i} $ and $ \bar{v}_{i} $ by $ ||v_{i} ||=1 $ and $|| \bar{v}_{i}||=-1 $. Then a monomial of grade $ m $ is in $ C _{m}(\C \mathbb{P}^{n-1}) $.  
A grade zero monomial such as $ v_{1}  \bar{v}_{2} \,\bar{ v}_{3} v_{4}   $ is invariant for the circle action and so gives a function on $ \C \mathbb{P}^{n-1} $. 

 An element of the tautological bundle $ \tau $ at $ [\underline{v}]\in \C \mathbb{P}^{n-1} $ is given by $ \alpha \underline{v} \in \C^{n} $ for $ \alpha \in \C $ and the inner product on $ \tau $ is given by 
\begin{equation}\label{underline}
	\langle \overline{\alpha \,\underline{v}}, \beta \underline{v} \rangle= \bar{\alpha}\beta \in \C\ ,
\end{equation} 
noting the use of the conjugate bundle to give bilinearity and be consistent with the earlier Hilbert $ C^{*} $-bimodule inner product. A section of the tautological bundle is a function $ r:\C \mathbb{P}^{n-1 } \rightarrow \mathrm{Row}^{n}(\C)$ so that $ r\big([\underline{v}]\big) $ is a multiple of $ \underline{v} $. We have a $ 1-1 $ correspondence between continuous sections of $ \tau $ and $ C_{-1}(\C \mathbb{P}^{n-1})  $. If $ f\in C_{-1}(\C \mathbb{P}^{n-1})  $ then $ (f. v_{1}, \dots , fv_{n}) $ is a section and if $( r_{1}, \dots, r_{n}) $ is a section then $ r_{i} \bar{v}^{i} $ is in $ C_{-1}(\C \mathbb{P}^{n-1})  $.    

Recalling that $ \sum_{i} v_{i} \bar{v}_{i}=1 $ and applying $ \di $ gives $ \sum_{i} \big(\di v_{i} \,\bar{v}_{i}+v_{i} \di \bar{v}_{i}\big)=0 $ and as we require a complex calculus on $ \C \mathbb{P}^{n-1} $ we get both $ \sum_{i} \di v_{i}\, \bar{v}_{i}=0 $ and $ \sum_{i} v_{i} \di \bar{v}_{i}=0 $ as relations on $ \Omega^{1}(\C \mathbb{P}^{n-1}) $. Applying $ \di $ again gives $ \sum_{i} \di v_{i} \wedge \di \bar{v}_{i}=0 $ in $ \Omega^{2}(\C \mathbb{P}^{n-1}) $.
\subsection{Categories of modules and connections}\label{subsec3}
For an algebra $ A $ we take $ \mathcal{M}_{A} $ to be the category of right $ A $-modules and right module maps. If $ A $ has a differential calculus we take $ \mathcal{E}_{A} $ to be the category with objects $ (E, \nabla_{E}) $ where $ E $ is a right $ A $-module and $ \nabla_{E} $ is a right connection on $ E $. A morphism $ T $ from $( E, \nabla_{E}) $ to be $ (F,\nabla_{F}) $ consists of a right module map $ T: E \rightarrow F $ which commutes with the connections, i.e.
\[\nabla_{F} T= (T \tens \id) \nabla_{E} : E \rightarrow F\tens_{A} \Omega_{A}^{1}\ .\]
\begin{proposition}\label{prop4}
	For a right $ A $-$ B $ bimodule connection $ (\nabla_{W}, \sigma_{W}) $, there is a functor $ \tens_{A} W: \mathcal{E}_{A} \rightarrow \mathcal{E}_{B}$ sending $ (\nabla_{F}, F) $ to $ (\nabla_{F \tens W}, F \tens_{A} W) $, where $ \nabla_{F \tens W}  $ is 
	\begin{equation*}
		\nabla_{F \tens W} (f\tens e)=(\id \tens \sigma_{W})(\nabla_{F}(f) \tens e)+f \tens \nabla_{W}(e)\ .
	\end{equation*}
\end{proposition}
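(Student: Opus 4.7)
The plan is to verify three things in turn: (i) that the stated formula for $\nabla_{F \tens W}$ descends to the balanced tensor product $F \tens_{A} W$, (ii) that it satisfies the right $B$-Leibniz rule, and (iii) that a morphism $T$ in $\mathcal{E}_A$ induces a morphism $T \tens \id$ in $\mathcal{E}_B$. Only the first of these requires serious use of the hypotheses; the rest are essentially bookkeeping.

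For well-definedness, I would fix $f \in F$, $e \in W$, $a \in A$ and check $\nabla_{F \tens W}(fa \tens e) = \nabla_{F \tens W}(f \tens ae)$. Expanding the left side using the right Leibniz rule for $\nabla_F$, one gets
\[
(\id \tens \sigma_W)\bigl((\nabla_F f) \cdot a \tens e\bigr) + (\id \tens \sigma_W)(f \tens \extd a \tens e) + fa \tens \nabla_W e\,.
\]
Expanding the right side using the bimodule property $\nabla_W(ae) = \sigma_W(\extd a \tens e) + a \cdot \nabla_W e$ yields
\[
(\id \tens \sigma_W)(\nabla_F f \tens ae) + f \tens \sigma_W(\extd a \tens e) + f \tens a \cdot \nabla_W e\,.
\]
The first terms agree because in $\Omega^1_A \tens_A W$ one has $\xi a \tens e = \xi \tens a e$; the middle terms match trivially; the last terms match because the outer tensor product is over $A$. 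This is the one step where the bimodule map $\sigma_W$ is essential.

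For the right Leibniz rule, I would compute $\nabla_{F \tens W}(f \tens eb)$, use the Leibniz rule for $\nabla_W$ in the second summand and the right $B$-linearity of the first summand (which follows from $\sigma_W$ landing in $W \tens_B \Omega_B^1$ and $\nabla_F f$ being $B$-independent), and collect terms to obtain $\nabla_{F \tens W}(f \tens e) \cdot b + (f \tens e) \tens \extd b$. For functoriality, given $T : (E,\nabla_E) \to (F,\nabla_F)$ in $\mathcal{E}_A$, the map $T \tens \id : E \tens_A W \to F \tens_A W$ is right $B$-linear, and the relation $\nabla_F T = (T \tens \id) \nabla_E$ together with naturality of $\id \tens \sigma_W$ gives $\nabla_{F \tens W}(T \tens \id) = (T \tens \id \tens \id) \nabla_{E \tens W}$. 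Identity morphisms and composition are then preserved on the nose.

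The main obstacle is the well-definedness step, because it is the only place where one must reconcile the right connection $\nabla_F$ on the $A$-side with the left $A$-action on $W$, and this reconciliation is precisely what the bimodule map $\sigma_W$ is designed to do. Once this check is made, the remaining properties reduce to manipulating Leibniz rules and $A$-balance, with no further substantive input.
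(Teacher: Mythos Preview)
Your proof is correct and is the standard verification of this well-known construction. Note, however, that the paper does not actually supply a proof of this proposition: it is stated without proof in the preliminaries, with the construction attributed to \cite{BDMS} (and implicitly to \cite{BMbook}). So there is no argument in the paper to compare against; your three-step check (well-definedness over $\tens_A$, the right $B$-Leibniz rule, and functoriality on morphisms) is exactly what one would write if a proof were required, and each step is handled correctly.
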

\subsection{Holomorphic bundles }\label{subsec2}
Let $ B $ be a $ * $-algebra with a $ * $-differential calculus. We use the noncommutative complex calculi from \cite{KhLavS}, \cite{BegSmiComplex}. Suppose we have a direct sum decomposition $ \Omega_{B}^{n}=\oplus _{p+q=n} \Omega_{B}^{p,q} $ as bimodules, and that $ \Omega_{B}^{p,q} \wedge \Omega_{B}^{s,t} \subset \Omega_{B}^{p+s,q+t} $ and $ \di \Omega_{B}^{p,q} \subset \Omega_{B}^{p+1,q} \oplus \Omega_{B}^{p,q+1} $ and $ (\Omega^{p,q})^{*}=\Omega^{q,p} $.  Using the projection operations for the direct sum $ \pi^{p,q}: \Omega_{B}
^{p+q} \rightarrow \Omega_{B}^{p,q}$ we can define 
\begin{gather*}
	\partial=  \pi^{p+1,q} \di  :  \Omega_{B}^{p,q} \rightarrow  \Omega_{B}^{p+1,q} \cr
	\bar{\partial}=  \pi^{p,q+1} \di  :  \Omega_{B}^{p,q} \rightarrow  \Omega_{B}^{p,q+1} 
\end{gather*}
which gives a holomorphic calculus. Given a right connection $ \nabla_{G}: G \rightarrow G \tens_{B} \Omega_{B}^{1} $ then we define $ \bar{\partial}_{G}=(\id \tens  \pi^{0,1}) \nabla_{G}:G \rightarrow  G \tens_{B} \Omega_{B}^{0,1}$. The holomorphic curvature of $ G $ is defined to be the curvature of the $ \bar{\partial}_{G} $ connection, i.e. 
\[(\id \tens \bar{\partial}+\bar{\partial}_{G} \wedge \id) \bar{\partial}: G \rightarrow G \tens \Omega_{B}^{0,2}\ .\]
\begin{definition}\label{de3}
	Suppose that we have a right connection $ \bar{\partial}_{G}: G\rightarrow G \tens_{B} \Omega_{B}^{0,1}$ with holomorphic curvature zero. Then $ (G, \bar{\partial}_{G}) $ is called a holomorphic right module.
\end{definition}
	\section{The KSGNS construction of the state evaluation map}\label{ch1}
	For a subset $ X\subset S $ of the state spaces of a $ C^{*} $-algebra $ A $ the positive map $ \delta: A\rightarrow C(X) $ is given by $ \delta(a)(\phi)=\phi (a) $ for $ a\in A $ and $ \phi \in X $. We use a standard construction of a completely positive map using a Hilbert $ C^{*} $-bimodule, and this is part of the KSGNS construction \cite{Lance}. We start with $ A \tens C(X)  $ as an $ A $-$ C(X) $ bimodule and the semi-inner product $ \langle,\rangle:\overline{A \tens C(X)  } \tens_{A} (A \tens C(X)  )\rightarrow C(X)$ defined by
	\begin{equation}\label{innerPr}
	\left\langle \overline{a \tens f}, a' \tens f' \right\rangle =f^{*} \delta (a^{*}a') f'\ .
	\end{equation}
Set $N$ to be the space of zero length vectors, i.e. $ \sum a_{i}\tens f_{i} $ so that 
\begin{equation*}
\big\langle \overline{\sum a_{i}\tens f_{i}}, \sum a_{j}\tens f_{j}\big\rangle =0\ .
\end{equation*}
Now we define $ E= (A \tens C(X))/N$. This has completion a Hilbert $ A$-$C(X)$ $C^{*}$-bimodule and given $ 1\tens 1 \in E $ we have 
\begin{equation*}
	\left\langle  \overline{1\tens1}, a.1\tens 1\right\rangle   =\delta(a)\ .
\end{equation*}
\subsection{The matrix algebra case}
	The pure states on $ M_{n}(\C) $ are parametrised by $\underline{v} \in \mathrm{Row}^n(\C)   $ by
\begin{equation}\label{eqn23}
	\phi_{\underline{v}}(a)=\underline{v} a \underline{v}^{*}\in \C
\end{equation}
where $ \underline{v}  \, \underline{v}^{*} =1$ for normalisation \cite{Choi}. Because scalar multiplication of $ \underline{v} $ by a unit norm complex number leaves the state unaffected the space of pure states is the quotient $ X=\C  \mathbb{P}^{n-1} $ of unit vectors in $ \mathrm{Row}^{n}(\C) $ i.e. $ S^{2n-1}  $ quotiented by the circle group $ U_{1} $. We take the positive map $ \delta: M_{n}(\C) \rightarrow C(\C \mathbb{P}^{n-1})$ defined by $ \delta(a)([\underline{v}])=\phi_{\underline{v}}(a) $ for $ \underline{v} \in S^{2n-1} $. We carry out the KSGNS construction given at the beginning of this section for $ A=M_{n}(\C)   $. We write $ M_{n}(\C) \tens C(\C \mathbb{P}^{n-1}) $ as $ \mathrm{Col}^{n}(\C) \tens C(\C \mathbb{P}^{n-1}, \mathrm{Row}^{n}(\C))$, which are isomorphic as $ \mathrm{Row}^{n}(\C)$ is finite dimensional. For $ c_{i}\tens r_{i} \in \mathrm{Col}^{n}(\C) \tens C(\C \mathbb{P}^{n-1}, \mathrm{Row}^{n}(\C))  $ the inner product  in (\ref{innerPr}) is 
\begin{equation}\label{eqn32}
	\langle\overline{	c_{1}\tens r_{1}}, c_{2}\tens r_{2}\rangle ([\underline{v} ])=\underline{v}  r_{1}([\underline{v} ])^{*} c_{1}^{*} c_{2} r_{2}(\underline{v} )\underline{v} ^{*}\in \C
\end{equation}
for $ \underline{v}  \in S^{2n-1}$ a row vector representing an element $ [\underline{v} ] $ of $ \C  \mathbb{P}^{n-1}$. 
\begin{proposition}\label{prop11}
	The quotient of $\mathrm{Col}^{n}(\C) \tens C(\C \mathbb{P}^{n-1}, \mathrm{Row}^{n}(\C))  $ by the length zero vectors $ N $ is isomorphic to $ \mathrm{Col}^{n}(\C) \tens \Gamma  \tau$ where $  \Gamma  \tau$ is the continuous sections of the tautological bundle $ \tau $.
\end{proposition}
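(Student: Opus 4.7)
The plan is to construct an explicit $M_n(\C)$--$C(\C\mathbb{P}^{n-1})$ bimodule map
\[\Phi: \mathrm{Col}^n(\C) \tens C(\C\mathbb{P}^{n-1}, \mathrm{Row}^n(\C)) \longrightarrow \mathrm{Col}^n(\C) \tens \Gamma\tau\]
given by pointwise projection of the $\mathrm{Row}^n(\C)$-valued factor onto the tautological line, and then show that $\ker\Phi = N$ and that $\Phi$ is surjective. For a continuous $r: \C\mathbb{P}^{n-1} \to \mathrm{Row}^n(\C)$ I would set $p(r)([\underline{v}]) := (r([\underline{v}])\,\underline{v}^{*})\,\underline{v}$: this is pointwise a multiple of $\underline{v}$ (hence a section of $\tau$), and the two explicit factors of $\underline{v}$ carry opposite $U_1$ charges, so independence of the chosen representative is automatic. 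Then set $\Phi\bigl(\sum_k c_k \tens r_k\bigr) := \sum_k c_k \tens p(r_k)$; it is a bimodule map since $M_n(\C)$ acts only on the first tensor factor and $p(r h) = h\cdot p(r)$ for $h \in C(\C\mathbb{P}^{n-1})$.

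For the kernel, introduce $M([\underline{v}]) := \sum_k c_k\, r_k([\underline{v}]) \in M_n(\C)$; formula (\ref{eqn32}) then gives the self inner product $\langle\overline{x},x\rangle([\underline{v}]) = \underline{v}\,M([\underline{v}])^{*}M([\underline{v}])\,\underline{v}^{*} = \|M([\underline{v}])\,\underline{v}^{*}\|^{2}$. Cauchy--Schwarz for the positive semi-definite form (\ref{innerPr}) shows that $x \in N$ iff this vanishes identically, equivalently iff $\sum_k c_k\,(r_k([\underline{v}])\,\underline{v}^{*}) = 0$ in $\mathrm{Col}^n(\C)$ for every $[\underline{v}]$. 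Under the identification $\Gamma\tau \cong C_{-1}(\C\mathbb{P}^{n-1})$ of Section \ref{subsec1}, $p(r_k)$ corresponds to $f_k([\underline{v}]) := r_k([\underline{v}])\,\underline{v}^{*}$, and because $\mathrm{Col}^n(\C)$ is finite dimensional the tensor $\sum_k c_k \tens f_k$ in $\mathrm{Col}^n(\C) \tens_{\C} C_{-1}(\C\mathbb{P}^{n-1})$ vanishes exactly when it vanishes pointwise---matching the above description of $N$.

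Surjectivity is then immediate: every section of $\tau$ has the form $s([\underline{v}]) = g([\underline{v}])\,\underline{v}$ for some $g \in C_{-1}(\C\mathbb{P}^{n-1})$, and since $\underline{v}\,\underline{v}^{*} = 1$ we have $p(s) = s$, so any simple tensor $h \tens s$ lies in the image. The main point demanding care is the kernel computation: translating the single-vector condition $\langle\overline{x},x\rangle = 0$ into pointwise vanishing of the projection (the Cauchy--Schwarz step for a semi-definite form), and then using finite-dimensionality of $\mathrm{Col}^n(\C)$ to promote ``vanishes at each point as a $\mathrm{Col}^n(\C)$-valued function'' to vanishing as an element of the algebraic tensor product with $C_{-1}(\C\mathbb{P}^{n-1})$.
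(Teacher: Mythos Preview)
Your proposal is correct and is essentially the same argument as the paper's: both proofs project the $\mathrm{Row}^n(\C)$-valued function onto the tautological line via $r\mapsto rP$ (your $p(r)=(r\,\underline{v}^{*})\,\underline{v}$ is exactly $rP$ with $P_{ij}=\bar v_i v_j$), observe that the inner product only sees $rP$, and identify the image with $\Gamma\tau$. The paper is terser, simply noting that $\langle\overline{c_1\tens r_1},c_2\tens r_2\rangle=\langle\overline{c_1\tens r_1P},c_2\tens r_2P\rangle$ so every $c\tens r(1-P)$ is null and the remaining $c\tens rP$ are non-null sections; you package this as an explicit bimodule map with a kernel computation, and you are more careful than the paper about the finite-dimensionality point that lets pointwise vanishing of $\sum_k c_k\,f_k$ be promoted to vanishing in the algebraic tensor product.
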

\begin{proof}
	For $ \underline{v}\in S^{2n-1} $ we look at the conditions for $ c_{i}\tens r_{i}$ to be in $ N $, which is $ \sum_{ i j}\langle \overline{c_{i} \tens r_{i}}, c_{i} \tens r_{i}\rangle =0$ using (\ref{eqn32}). Using the projection matrix $ P_{i j}=\bar{v}_{i} v_{j} $ we see that 
	\begin{align*}
		\langle c_{1} \tens r_{1}, c_{2},\tens r_{2}\rangle = 	\langle \overline{c_{1} \tens r_{1}P} , c_{2},\tens r_{2} P\rangle 
	\end{align*}
	just using the fact $ v_{i} \bar{v}_{i}=1 $ (summing over $ i $). Thus the null space $ N $ includes all $ c\tens r (1-P) $ and the only possible non-null elements are $ c \tens r P$, which is $ c \tens s $ where $ s $ is a multiple of $ \underline{v} $. A quick check shows that all these are not null (except $ 0 $). 
\end{proof}
The sections $ \Gamma \tau $ of $ \tau $ are identified with $ C_{-1} (\C  \mathbb{P}^{n-1} ) $ and so we have $  \mathrm{Col}^{n}(\C)  \tens C_{-1}(\C  \mathbb{P}^{n-1} )  $ with inner product
\begin{align}\label{eqn7}
	\langle\overline{c_{1}\tens f_{1}}, c_{2}\tens f_{2}\rangle=c_{1}^{*} c_{2} f_{1}^{*} f_{2}\in C(\C \mathbb{P}^{n-1})
\end{align}
and this a Hilbert $ M_{n} $-$ C(\C \mathbb{P}^{n-1}) $ $ C^{*} $-bimodule. Finally we consider $1\tens 1\in M_{n}(\C)\tens C(\C\mathbb{P}^{n-1})  $ and find $ e_{1\tens 1}=[1\tens1]\in  \mathrm{Col}^{n}(\C)  \tens C_{-1}(\C  \mathbb{P}^{n-1} ) $ under our isomorphism from Proposition \ref{prop11}. Take $ h_{i} $ to be the column vector with $ 1 $ in position $ i $ and zero elsewhere. Then in $ \mathrm{Col}^{n }(\C) \tens C(\C  \mathbb{P}^{n-1} , \mathrm{Row}^{n}(\C))$ $ e_{1\tens 1}=[1\tens 1 ]$ corresponds to $ h_{i} \tens h_{i} ^{*}$ summing over $ i $. Using the isomorphism from Section \ref{subsec1} between $ \Gamma \tau $ and $ C_{-1}(\C \mathbb{P}^{n-1}) $, $ e_{1\tens 1}=[1\tens 1] $ corresponds to $ h_{i} \tens \bar{v}_{i}\in \mathrm{Col}^{n}(\C) \tens C_{-1}(\C  \mathbb{P}^{n-1})$ summing over $ i $. Under the isomorphism we adapt (\ref{eqn23}) to give $ \phi: M_{n} \rightarrow  C(\C \mathbb{P}^{n-1} )$, for $ a=(a_{i j}) \in M_{n} $
\begin{align}\label{eqn31}
	\phi(a)=\sum_{ i j} \langle \overline{ h_{i}\tens \bar{v}_{i} }  \tens a h_{j} \tens \bar{v}_{j}\rangle= \sum_{ i j} v_{i} a_{ij} \bar{v}_{j}\ ,
\end{align}
and this is the state evaluation map. 
\section{Connections on the Hilbert $ C^{*} $-bimodule }\label{Se4}
We now have a formula (\ref{eqn31}) for the state evaluation map using bimodules, and would like to ask whether it is differentiable. To do this we use a bimodule connection. The first thing to do is to take the smooth functions as a subset of our Hilbert $ C^{*} $-bimodule $ \mathrm{Col}^{n}(\C) \tens C_{-1} (\C \mathbb{P}^{n-1})$ by setting $ E=  \mathrm{Col}^{n}(\C) \tens C_{-1}^{\infty} (\C \mathbb{P}^{n-1}) $.
\subsection{ Inner product preserving connections on $ E=\mathrm{Col}^{n}(\C) \tens C^{\infty}_{-1}(\C  \mathbb{P}^{n-1} )  $}\label{Se2}
We have generators of $  C^{\infty}_{-1}(\C  \mathbb{P}^{n-1} ) $, the smooth sections of  $ \tau $, given by $ \bar{v}_{i}$ and a projection matrix $  Q_{i j}=v_{i} \bar{v}_{j}$ so that $ \bar{v}_{i}  Q_{i j}= \bar{v}_{j}$.  We specify a right connection 
\begin{align*}
	\nabla_{E}: \mathrm{Col}^{n}(\C) \tens C_{-1}(\C  \mathbb{P}^{n-1} )  \rightarrow \mathrm{Col}^{n}(\C) \tens C_{-1}(\C  \mathbb{P}^{n-1} ) \tens_{C^{\infty}(\C \mathbb{P}^{n-1}) } \Omega^{1} (\C  \mathbb{P}^{n-1} ) 
\end{align*}
by, for some $\Gamma^{p q}{}_{ij} \in \Omega^{1} (\C  \mathbb{P}^{n-1} ) $ and summing over repeated indices
 \begin{align}\label{eqn37}
 	\nabla_{E} (h_{i}\tens\bar{v}_{j})=h_{p}\tens \bar{v}_{q}\tens \Gamma^{p q}{}_{i j}\ .
 \end{align}
As 
\begin{align*}
	h_{p}\tens \bar{ v}_{q} \tens	\Gamma ^{pq}{}_{ij} &= h_{p}\tens \bar{ v}_{s} Q _{sq}\tens	\Gamma ^{pq}{}_{ij} = h_{p}\tens \bar{ v}_{s} \tens	Q _{sq} \Gamma ^{pq}{}_{ij} 
\end{align*}
we can suppose without loss of generality that 
\begin{align}\label{eqn1}
	\Gamma ^{pq}{}_{ij} = Q _{qs} \Gamma ^{ps}{}_{ij} \ .
\end{align}
Also using $ \bar{v}_{j}=\bar{v}_{q} Q_{q j} $
 \begin{align*}
 	\nabla (h_{i}\tens \bar{ v}_{j} Q_{jk})&=h_{i}\tens \bar{ v}_{j} \tens \di Q_{jk}+h_{p}\tens\bar{ v}_{q} \tens  \Gamma ^{pq}{}_{ij} Q_{jk} \cr
 	&=\nabla (h_{i}\tens \bar{ v}_{k} )= h_{p}\tens\bar{ v}_{q} \tens  \Gamma ^{pq}{}_{ik} \ ,
 \end{align*}
so we have
 \begin{align}\label{eqn2}
 	\Gamma ^{pq}{}_{ij} (\delta_{jk}-Q_{jk})=\delta_{pi} Q_{qj} \di Q_{jk}\ .
 \end{align}
Thus for a right connection (\ref{eqn37}) we require (\ref{eqn1}) and (\ref{eqn2}) to be satisfied. 
\begin{proposition}\label{prop2}
	The connection (\ref{eqn37}) is a bimodule connection with
	\begin{equation*}
		\sigma_{E} : \Omega^{1}_{\mathrm{uni}} (M_{n} (\C)) \tens_{M_{n} (\C)} E \rightarrow E \tens\Omega^{1} (\C\mathbb{P}^{n-1})
	\end{equation*}
	extending to a bimodule map 
	\begin{equation*}
		\hat{\sigma }_{E} : M_{n} (\C) \tens M_{n} (\C)  \tens_{M_{n} (\C)} E \rightarrow E \tens\Omega^{1} (\C\mathbb{P}^{n-1})
	\end{equation*}
	by the formula, for $ E_{ij} $ the standard matrix with $ 1 $ in row  $ i $ column $ j $ and zero elsewhere
	\begin{equation*}
		\hat{\sigma }_{E} (E_{a b} \tens E _{s t} \tens h_{i} \tens \bar{v}_{j})= \delta_{ti} h_{a} \tens \bar{v}_{q} \tens  \Gamma^{b q}{}_{sj} \ .
	\end{equation*}
\end{proposition}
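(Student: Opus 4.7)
The plan is to recognize the proposed formula as the coordinate expression of the intrinsic rule $\hat\sigma_E(a \tens b \tens e) = a \cdot \nabla_E(be)$; once that identification is made, all the required properties follow by short calculations.

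First, I would verify this identification on generators. For $a = E_{ab}$, $b = E_{st}$ and $e = h_i \tens \bar v_j$, the left $M_n(\C)$-action yields $be = \delta_{ti}\, h_s \tens \bar v_j$; applying the definition (\ref{eqn37}) of $\nabla_E$ gives $\nabla_E(be) = \delta_{ti}\, h_p \tens \bar v_q \tens \Gamma^{pq}{}_{sj}$; and left multiplication by $E_{ab}$ selects $p = b$, producing $\delta_{ti}\, h_a \tens \bar v_q \tens \Gamma^{bq}{}_{sj}$, which is exactly the stated formula. With this intrinsic form in hand, the $M_n(\C)$-balance $a \tens bc \tens e = a \tens b \tens ce$ is automatic (both sides evaluate to $a \nabla_E(bce)$), so $\hat\sigma_E$ descends to a well-defined linear map on $M_n(\C) \tens M_n(\C) \tens_{M_n(\C)} E$, and no separate consistency check against the relations $\bar v_j = \bar v_q Q_{qj}$ in the generator presentation of $C_{-1}^{\infty}(\C\mathbb{P}^{n-1})$ is needed. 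Left $M_n(\C)$-linearity $\hat\sigma_E(ca \tens b \tens e) = c\,\hat\sigma_E(a \tens b \tens e)$ is then clear by associativity of the action.

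To finish, I would obtain the bimodule connection equation by inserting $\di_{\mathrm{uni}} a = 1 \tens a - a \tens 1$ into $\sigma_E$, giving $\sigma_E(\di a \tens e) = \nabla_E(ae) - a \nabla_E(e)$, which rearranges to the required identity $\nabla_E(ae) = \sigma_E(\di a \tens e) + a \nabla_E(e)$. Right $C^{\infty}(\C\mathbb{P}^{n-1})$-linearity of $\sigma_E$ on $\Omega^{1}_{\mathrm{uni}}(M_n(\C)) \tens_{M_n(\C)} E$ follows from the right Leibniz rule for $\nabla_E$: for $\xi = \sum_i a_i \tens b_i$ with $\sum_i a_i b_i = 0$, pulling $f \in C^{\infty}(\C\mathbb{P}^{n-1})$ across $\sigma_E$ produces an extra term $(\sum_i a_i b_i)\, e \tens \di f$, which vanishes because $\sum_i a_i b_i = 0$ is precisely the defining condition of $\Omega^{1}_{\mathrm{uni}}$. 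The only delicate point is the very first one, namely reading the formula intrinsically as $a\,\nabla_E(be)$ rather than attempting a direct verification by substitution in the generator presentation of $E$, which would otherwise force one into a fairly intricate use of (\ref{eqn1}) and (\ref{eqn2}) to match terms.
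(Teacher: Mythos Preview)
Your argument is correct and follows the same route as the paper: both rest on the identity $\sigma_E(\di a\otimes e)=\nabla_E(ae)-a\,\nabla_E(e)$, with the paper computing each side in coordinates (its display for $\sigma_E(\di E_{st}\otimes h_i\otimes\bar v_j)$) and then checking that the stated $\hat\sigma_E$ reproduces it on $\di E_{st}=\sum_p E_{pp}\otimes E_{st}-E_{st}\otimes\sum_p E_{pp}$, while you package the same computation via the intrinsic identification $\hat\sigma_E(a\otimes b\otimes e)=a\,\nabla_E(be)$.

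One small caveat: your $\hat\sigma_E=a\,\nabla_E(be)$ is right $C^\infty(\C\mathbb{P}^{n-1})$-linear only after restriction to $\Omega^1_{\mathrm{uni}}\otimes_{M_n}E$ (the Leibniz defect $ab\cdot e\otimes\di f$ survives for general $a\otimes b$), so strictly speaking you have shown that $\sigma_E$ is a bimodule map, not that $\hat\sigma_E$ is one on all of $M_n\otimes M_n\otimes_{M_n}E$. The paper handles this differently, declaring $\hat\sigma_E$ right-linear from the generator formula; the two extensions agree on $\Omega^1_{\mathrm{uni}}\otimes_{M_n}E$, which is all that is used downstream.
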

\begin{proof}
The bimodule connection condition gives, 
\begin{align}\label{eqn18}
	\sigma_{E}(\di E_{st} \tens h_{i}\tens \bar{v}_{j})&=\nabla_{E} (E_{st} h_{i}\tens \bar{v}_{j})-E_{st} \nabla_{E} (h_{i}\tens \bar{v}_{j})\cr
	&=\delta_{ti} \nabla _{E}( h_{s}\tens \bar{v}_{j})-E_{st} \nabla _{E}(h_{i}\tens \bar{v}_{j})\cr
	&=\delta_{ti} h_{p}\tens \bar{v}_{q}\tens \Gamma ^{pq}{}_{sj} - E_{st} h_{p}\tens \bar{v}_{q}\tens \Gamma ^{pq}{}_{ij} \cr
	&=\big(\delta_{ti} h_{p} \delta_{sr}-\delta_{t p} h_{s}\delta_{ri}\big)\tens  \bar{v}_{q} \tens \Gamma ^{pq}{}_{rj} \ .
\end{align}
	 Note that $ \hat{\sigma}_{E} $ is explicitly a left module map and is extended to a right $ C (\C \mathbb{P}^{n-1}) $ module map by multiplication on the rightmost factor. Then for the universal calculus we get $ \di E_{s t}= I _{n} \tens E_{st}-E_{st} \tens I_{n} $, and summing over $ k $
	\begin{align*}
		\hat{\sigma }_{E}(\di E_{st} \tens h_{i} \tens \bar{v}_{j} )&= \hat{\sigma}_{E}  (E_{pp} \tens E_{st} \tens h_{i}\tens \bar{v}_{j})-\hat{\sigma}_{E}  (E_{st} \tens E_{pp} \tens h_{i}\tens \bar{v}_{j})\cr 
		&=\delta_{ti} h_{p} \bar{v}_{q}  \tens \Gamma^{pq }{}_{r j}  \delta_{sr}-\delta_{tp} \delta_{ri} h_{s} \tens\bar{v}_{q}  \tens \Gamma^{pq }{}_{r j}  
	\end{align*}
	which agrees with (\ref{eqn18}).
\end{proof}
The curvature of the connection is given by 
\begin{align*}
	R_{E} (h_{i} \tens \bar{v}_{j})&=(\id \tens \di + \nabla_{E}\wedge \id ) \nabla_{E}(h_{i}\tens \bar{v}_{j})\cr
	&= h_{p}\tens \bar{v}_{q} \tens \di\, \Gamma ^{pq}{}_{ij} +h_{s}\tens  \bar{v}_{t} \tens \Gamma ^{st}{}_{pq} \wedge \Gamma ^{pq}{}_{ij}\cr 
		&= h_{p}\tens \bar{v}_{q} \tens \big(\di\, \Gamma ^{pq}{}_{ij} +    \Gamma ^{pq}{}_{st} \wedge \Gamma ^{st}{}_{ij}\big)\ .
\end{align*}
We set $ X^{pq}{}_{ij} =\di \Gamma^{p q}{}_{ij}+    \Gamma ^{pq}{}_{st} \wedge \Gamma ^{st}{}_{ij}$ so
\begin{equation}\label{eqn27}
	R_{E}(h_{i}\tens \bar{v}_{j})=h_{p}\tens \bar{v}_{q} \tens X^{pq}{}_{ij}\ .
\end{equation}
Using (\ref{eqn9}) and where 
$ E_{rt} $ is the matrix with $1  $ in row $ p $ column $t   $ and zero elsewhere 
\begin{align}\label{eqn16}
	R_{E}(E_{rt}h_{i}\tens \bar{v}_{j})-	E_{rt} R_{E}(h_{i}\tens \bar{v}_{j})
	&=\delta_{ti} h_{p}\tens \bar{v}_{q}  \tens X^{pq}{}_{rj}- E_{rt} h_{p}\tens \bar{v}_{q} \tens X^{pq}{}_{ij}\cr 
&=\delta_{ti} h_{p}\tens \bar{v}_{q}  \tens X^{pq}{}_{rj}- \delta_{tp} h_{r}\tens \bar{v}_{q} \tens X^{pq}{}_{ij} \ .  
\end{align}
We see that the curvature is not necessarily a left module map through by general theory it must be a right module map. 

We require two additional properties of our connection, that it preserves the inner product (\ref{eqn7}) and that it vanishes on $ e_{1\tens 1} $. The inner product from (\ref{eqn7}) gives
\begin{align}\label{eqn14}
	\langle \overline{h_{s}\tens \bar{ v}_{t}}, h_{i}\tens \bar{ v}_{j}\rangle=\delta_{si} v_{t} \bar{ v}_{j}
\end{align}
and for the connection (\ref{eqn37}) to preserve the inner product we require
\begin{align}\label{eqn3}
	\delta_{is } \di (v_{t}\bar{ v}_{j})&= 	\langle \overline{h_{s}\tens \bar{ v}_{t}}, h_{p}\tens \bar{ v}_{q}\rangle \Gamma ^{pq}{}_{ij} +\big(\Gamma ^{pq}{}_{st} \big)^{*} \langle \overline{h_{p}\tens \bar{ v}_{q}}, h_{i}\tens \bar{ v}_{j}\rangle \cr
	&= 	\delta_{sp} v_{t}\bar{ v}_{q} \Gamma ^{pq}{}_{ij}+\big(\Gamma ^{pq}{}_{st} \big)^{*} \delta_{pi} v_{q}\bar{ v}_{j}\ .
\end{align}
We also need for $ \nabla_{E} (e_{1\tens 1})=0 $
\begin{align}\label{eqn12}
	0=\nabla_{E}(h_{i}\tens \bar{v}_{i})= h_{p}\tens \bar{v}_{q}\tens \Gamma ^{pq}{}_{ii} 
\end{align}
so $ \Gamma ^{pq}{}_{ii} =0 $.  
\subsection{ A simple example of the connection}\label{sub1}
Here we find a simple example of a connection satisfying the previous conditions in Section \ref{Se2}. From (\ref{eqn1}) we have  $ \Gamma^{pq}{}_{rs} =  v_{q} C^{p}{}_{rs} $ where $ C ^{p}{}_{rs} =\bar{v}_{j}\Gamma^{pj}{}_{rs} $.  Now (\ref{eqn2}) becomes  
\begin{align*}
	v_{q} \,  C^{p}{}_{i j} (\delta_{jk}-Q_{jk})=\delta_{pi} v_{q}\bar{v}_{s}\di  (v_{s}\bar{v}_{k})
\end{align*}
and as this is true for all $ q $ we deduce, using the relations for $ \Omega^{1}(\C \mathbb{P}^{n-1}) $ 
\begin{align}\label{eqn4}
	C^{p}{}_{i j} (\delta_{jk}-Q_{jk})=\delta_{pi} \bar{v}_{s}\big(\di  v_{s}\bar{v}_{k}+v_{s}\di \bar{v}_{k}\big)=\delta_{pi}  \di \bar{v}_{k}\ .
\end{align}
Also (\ref{eqn3}) gives
\begin{align}\label{eqn5}
	\delta_{is} \di Q_{tj}&= \delta_{sp} v_{t} \bar{v}_{q} v_{q} \,C^{p}{}_{ij} + \bar{v}_{q} (C^{p}{}_{st})^{*} \delta_{pi} v_{q}  \bar{v}_{j}   \cr
	&= \delta_{sp} v_{t} \,C^{p}{}_{ij}+ \delta_{pi} \bar{v}_{j}   (C^{p}{}_{st})^{*} = v_{t}\, C^{s}{}_{ij} +\bar{v}_{j}   (C^{i}{}_{st})^{*}
\end{align}
Thus we have for a right connection (\ref{eqn4}), for metric preserving we get (\ref{eqn5}), for $ \nabla(e_{1\tens 1}) =0$ we get $ C^{p}{}_{ii}=0 $. The curvature is 
\begin{align*}
	R_{E}(h_{i}\tens \bar{v}_{j})=h_{p}\tens \bar{v}_{q}\tens \big(\di (v_{q}\, C^{p}{}_{ij})+v_{q}v_{t} C^{p}{}_{st} \wedge C^{s}{}_{ij}\big)
\end{align*}
and using $ \bar{v}_{q}=\bar{v}_{m} v_{m} \bar{v}_{q}$
\begin{align}\label{eqn6}
	R_{E}(h_{i}\tens \bar{v}_{j})&=h_{p}\tens \bar{v}_{m}\tens v_{m}\bar {v}_{q}\big(\di v_{q} \wedge C^{p}{}_{ij}+v_{q} \di C^{p}{}_{ij} +v_{q} v_{t}\, C^{p}{}_{st} \wedge C^{s}{}_{ij}\big)\cr
	&=h_{p}\tens \bar{v}_{m}\tens v_{m}\big(\di C^{p}{}_{ij}+v_{t}\,  C^{p}{}_{st} \wedge  C^{s}{}_{ij} \big)\ .
\end{align}
To simplify this further, from (\ref{eqn4}) we write 
\begin{align*}
	C^{p}{}_{i k} = 	C^{p}{}_{i j} (\delta_{jk}-Q_{jk})+ 	C^{p}{}_{i j}\, Q_{jk}=\delta_{pi} \di \bar{v}_{k}+ 	C^{p}{}_{i j}\, v_{j}\bar{v}_{k} 
\end{align*}
and we set $ D_{pi}=C^{p}{}_{i j} v_{j} $, and then $ 	C^{p}{}_{i k} =\delta_{pi} \di \bar{v}_{k}+ 	D_{pi} \,\bar{v}_{k}  $. Now (\ref{eqn4} ) is automatically true and (\ref{eqn5}) becomes
\begin{align*}
	\delta_{is} (\di v_{t}\bar{v}_{j} + v_{t}\di  \bar{v}_{j} )&=v_{t}(\delta_{si}\di \bar{v}_{j}+D_{s i}\bar{v}_{j})+\bar{v}_{j}(\delta_{is}\di \bar{v}_{t}+D_{is}\bar{v}_{t})^{*}\cr
	&=\delta_{is} (v_{t}\di  \bar{v}_{j} + \bar{v}_{j} \di v_{t} )+v_{t}\bar{v}_{j}(D_{si}+(D_{is})^{*})\ .
\end{align*}
We conclude that for matrix $ D $ we have (\ref{eqn5}) if and only if $ D^{*}+D =0$ as a matrix. Next we require 
\begin{align}\label{eqn8}
	C^{p}{}_{ii}=\delta_{pi} \di \bar{v}_{i}+D_{pi} \bar{v}_{i}= \di \bar{v}_{p}+D_{pi} \bar{v}_{i}=0\ .
\end{align}
Finally we put 
\begin{align*}
	D_{pi}=-\di \bar{v}_{p} v_{i}+\di v_{i}\bar{v}_{p}+G_{p i}\ .
\end{align*}
Now we have from (\ref{eqn8})
\begin{align*}
	D_{pi}\bar{v}_{i}=-\di \bar{v}_{p}+G_{pi}\bar{v}_{i},
\end{align*}
so we have the condition $ G_{pi}\bar{v}_{i}=0 $, and also 
\begin{align*}
	(D_{i p})^{*}=-\di v_{i}\bar{v}_{p}+\di \bar{v}_{p}v_{i}+(G_{i p})^{*}
\end{align*}
so $ D^{*}+D=0 $ if and only if $ G^{*}=-G $. Now we calculate the bracket in the formula for the curvature in (\ref{eqn6}). This is 
\begin{align*}
	\di C^{p}{}_{ij}+C^{p}{}_{st}\,  v_{t}\wedge  C^{s}{}_{ij}=\bar{v}_{j}\big(G_{ps}\wedge G_{si}-v_{i} G_{ps}\wedge \di \bar{v}_{s}+\bar{v}_{p} \di v_{s} \wedge G_{si}+\di G_{pi} -\di v_{i}\wedge \di \bar{v}_{p}\big)\ .
\end{align*}
We can simplify the curvature while satisfying all our conditions simply by putting $ G=0 $, to give 
\begin{align}\label{eqn9}
	R_{E}(h_{i}\tens \bar{v}_{j})=h_{p}\tens \bar{v}_{m} \tens v_{m}\bar{v}_{j} \di \bar{v}_{p} \wedge \di v_{i}=h_{p}\tens \bar{v}_{j} \tens\di \bar{v}_{p} \wedge \di v_{i}\ .
\end{align} 
For completeness we calculate
\begin{align}\label{eqn15}
	\Gamma^{pq}{}_{rs}=v_{q} C^{p}{}_{rs}&=v_{q} \big(     \delta_{pr} \di \bar{v}_{s}+ 	D_{pr} \,\bar{v}_{s}       \big)\cr
	&=v_{q} \big(     \delta_{pr} \di \bar{v}_{s}+ 	\bar{v}_{s}   (-\di \bar{v}_{p} v_{r}+\di v_{r}\bar{v}_{p})    \big)\cr
	&=v_{q} \big(     \delta_{pr} \di \bar{v}_{s}-	\bar{v}_{s}   v_{r} \di \bar{v}_{p} +\bar{v}_{s}\bar{v}_{p} \di v_{r}  \big)\ ,
\end{align}
and from (\ref{eqn27})
\begin{equation}\label{eqn39}
	X^{pq}{}_{ij} =\delta_{qj} \di \bar{v}_{p} \wedge \di v_{i}\ .
\end{equation}
\section{Differentiating positive maps}
We wish to extend the map $ \phi : A \rightarrow B $ defined by $ \phi(a)= \langle \bar{e},a e  \rangle $ in (\ref{eqn31}) to a map of differential forms $ \phi: \Omega_{A}^{m} \rightarrow \Omega_{B}^{m} $. A theory of how to do this is set down in \cite{BMbook}, (using left instead of right connections), but it assumes conditions on the curvature which we do not have and results in a cochain map, so we need to be more careful and give a more general account of the theory, beginning with how $ \sigma_{E} $ extends to a map of differential forms, with general algebras $ A, B $, and bimodule $ W $. 
\subsection{General theory of extendability and curvature}\label{Se3}
We begin with a right handed version of Lemma 3.72 in \cite{BMbook}.  For algebras $ A,B $ with calculi we suppose that $ (\nabla_{W}, \sigma_{W})  $ is a bimodule connection on an $ A $-$ B $ bimodule $ W $.  The curvature $ R_{W} $ of a right bimodule connection must be a right module map, but not necessarily a bimodule map.  
\begin{lemma} \label{Lem1}
	 Given an $ A $-$ B $ bimodule $ W $ with a right bimodule connection $ \nabla_{W}: W \rightarrow W \tens_{B} \Omega_{B}^{1}  $ and $ \sigma_{W}: \Omega_{A}^{1}\tens_{A} W\rightarrow W \tens_{B}\Omega_{B}^{1} $, for the curvature we have 
\begin{gather*}
	R_{W}(a\,e)-a\,R_{W}(e)=(\sigma_{W}\wedge \id)(\di a \tens \nabla_{W}(e))+(\id \tens \di +\nabla _{W}\wedge \id ) \sigma_{W}  (\di a \tens e)\cr
	c R_{W} (a e)-ca R_{W}(e) = 
		(\sigma _{W}\wedge \id ) (c \di a \tens \nabla_{W} e )+(\id \tens \di +\nabla_{W} \wedge \id ) \sigma_{W}(c \di a \tens e )\cr
		\quad- (\sigma_{W} \wedge \id ) (\id \tens \sigma_{W}) (\di c \tens \di a \tens e)\ .
\end{gather*}
\end{lemma}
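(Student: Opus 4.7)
My plan is to derive both identities directly from the bimodule connection rule $\nabla_W(ae) = \sigma_W(\di a \tens e) + a\nabla_W(e)$ by applying $\nabla_W^{[1]}$ to both sides, and then, for the second identity, using that rule a second time to commute the extra factor $c$ past $\nabla_W$.

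For the first identity, I would write $\nabla_W(e) = f \tens \xi$ (sum implicit) so that $a\nabla_W(e) = af \tens \xi \in W \tens_B \Omega_B^1$. The definition of $\nabla_W^{[1]}$ gives $\nabla_W^{[1]}(af \tens \xi) = \nabla_W(af) \wedge \xi + af \tens \di \xi$, and substituting $\nabla_W(af) = \sigma_W(\di a \tens f) + a\nabla_W(f)$ splits this cleanly into $(\sigma_W \wedge \id)(\di a \tens \nabla_W(e)) + a R_W(e)$. The other piece, $\nabla_W^{[1]}\sigma_W(\di a \tens e)$, is immediately $(\id \tens \di + \nabla_W \wedge \id)\sigma_W(\di a \tens e)$ from the definition of $\nabla_W^{[1]}$ on a simple tensor. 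Adding the two contributions reproduces the first identity.

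For the second identity I would multiply the first by $c$ on the left and track how $c$ migrates through each factor. Because $\sigma_W$ is a left $A$-module map, $c$ moves freely into $(\sigma_W \wedge \id)(\di a \tens \nabla_W(e))$ and into $\sigma_W(\di a \tens e)$, producing the $c\,\di a$ slots. The delicate step is $c(\id \tens \di + \nabla_W \wedge \id)\sigma_W(\di a \tens e)$: writing $\sigma_W(\di a \tens e) = g \tens \eta$, this expression equals $\nabla_W(cg) \wedge \eta + cg \tens \di \eta$, and expanding $\nabla_W(cg)$ by the bimodule rule produces an extra term $\sigma_W(\di c \tens g) \wedge \eta$ beyond $c(\id \tens \di + \nabla_W \wedge \id)(g \tens \eta)$. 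Rewriting that extra term as $(\sigma_W \wedge \id)(\id \tens \sigma_W)(\di c \tens \di a \tens e)$ and moving it across gives the stated correction.

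The main obstacle is purely notational bookkeeping: keeping the sum-implicit tensor factors $f \tens \xi$ and $g \tens \eta$ straight under $\wedge$ and $\tens$, and recognising the composite $(\sigma_W \wedge \id)(\id \tens \sigma_W)$ as the signature of invoking the bimodule rule a second time inside $\nabla_W^{[1]}$. No deeper structural ingredient beyond the definitions of $\sigma_W$, $\nabla_W^{[1]}$, and $R_W$ is required.
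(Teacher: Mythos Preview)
Your proposal is correct and follows essentially the same route as the paper's proof: apply $\nabla_W^{[1]}=(\id\tens\di+\nabla_W\wedge\id)$ to $\nabla_W(ae)=\sigma_W(\di a\tens e)+a\nabla_W(e)$ for the first identity, then left-multiply by $c$ and use the bimodule rule once more to extract the $(\sigma_W\wedge\id)(\id\tens\sigma_W)(\di c\tens\di a\tens e)$ correction. One small wording slip: the expression $\nabla_W(cg)\wedge\eta+cg\tens\di\eta$ is $(\id\tens\di+\nabla_W\wedge\id)\sigma_W(c\,\di a\tens e)$, not $c(\id\tens\di+\nabla_W\wedge\id)\sigma_W(\di a\tens e)$ as you wrote, but your subsequent comparison of the two makes clear you have the right computation in mind.
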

\begin{proof}
	By definition of $ R_{W} $,
	\begin{align*}
		R_{W}(a e)&= (\id \tens \di +\nabla_{E}\wedge \id ) \nabla_{W} (a e)\cr
		&= (\id \tens \di +\nabla_{W}\wedge \id )\big(\sigma_{W}(\di a \tens  e)+a . \nabla_{W} e\big)\cr
		&=(\id \tens \di +\nabla_{W}\wedge \id )\sigma_{W}(\di a \tens  e)+(\sigma_{W}\wedge \id )(\di a \tens \nabla_{W} e)+a . R_{W}(e)\ .
	\end{align*}
Now multiply the first equation in the statement by $ c\in A $ to get 
\begin{align*}
	c R_{W} (a e )- c a R_{W}(e)= (\sigma \wedge \id ) (c \di a \tens \nabla_{W} e)+c (\id \tens \di +\nabla_{W} \wedge \id ) \sigma_{W} (\di a \tens e )
\end{align*}
and use the definition of $ \sigma_{W} $ again to get the second equation. 
\end{proof}
The  following definition is a right version of extendability from \cite{BMbook}.
\begin{definition}\label{de1}
		 Given an $ A $-$ B $ bimodule $ W $ with a right bimodule connection $ \nabla_{W}: W \rightarrow W \tens_{B} \Omega_{B}^{1}  $ and $ \sigma_{W}:\Omega_{A}^{1}\tens_{A} W \rightarrow W \tens_{B}\Omega_{B}^{1} $, we say\underline{} that $ (\nabla_{W}, \sigma_{W}) $ is extendable if $ \sigma_{W}  $ extends to a map $ \sigma_{W}: \Omega_{A}^{n} \tens_{A} W \rightarrow W\tens_{B} \Omega_{B}^{n} $ such that for all $ \xi , \eta \in \Omega_{A} $
	\begin{align}\label{eqn24}
		\sigma_{W} (\xi \wedge \eta \tens e)=(\sigma_{W} \wedge \id) (\id \tens \sigma)(\xi \tens \eta \tens e)\ .
	\end{align} 
\end{definition}
\begin{cor}\label{cor1}
The $\sigma_{W}$ in Lemma \ref{Lem1} is extendable for the maximal prolongation calculus $ \Omega_{A}^{n} $ if and only if, for all $ c_{i} $, $ a_{i}\in A $ with $ \sum_{ i } c_{i} \di a_{i}=0 \in \Omega^{1}_{A} $
\begin{align}\label{eqn25}
	\sum_{ i }\big( c_{i} a _{i} R_{W}(e)-c_{i} R_{W} (a_{i} e)\big)=0\ .
\end{align}
\end{cor}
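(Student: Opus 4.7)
The plan is to reduce the extendability condition, which a priori involves all degrees, to a single compatibility at degree two, and then identify that compatibility with condition (\ref{eqn25}) by direct use of Lemma \ref{Lem1}. In the maximal prolongation, $\Omega_A^n$ is the quotient of $(\Omega_A^1)^{\otimes_A n}$ by the relations obtained from the degree two relations $\sum_i \di c_i \wedge \di a_i = 0$, one for each first order relation $\sum_i c_i \di a_i = 0$, wedged on the left and the right by elements of $\Omega_A^\bullet$. Since iterating (\ref{eqn24}) gives an unambiguous formula for $\sigma_W$ on $(\Omega_A^1)^{\otimes_A n} \tens_A W$, extendability is precisely the statement that this formula descends through those relations to $\Omega_A^n \tens_A W$. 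The degree two content is therefore that
\[\sum_i (\sigma_W \wedge \id)(\id \tens \sigma_W)(\di c_i \tens \di a_i \tens e) = 0 \text{ in } W \tens_B \Omega_B^2\]
for every first order relation $\sum_i c_i \di a_i = 0$ and every $e \in W$.

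To convert this vanishing into (\ref{eqn25}), I would sum the second identity of Lemma \ref{Lem1} over $i$. Because $\sigma_W$ is a bimodule map in its left slot, the substitution $\sum_i c_i \di a_i = 0$ kills the first right hand side term $(\sigma_W \wedge \id)(\sum_i c_i \di a_i \tens \nabla_W e)$, and also kills the input to the second term $(\id \tens \di + \nabla_W \wedge \id)\sigma_W(\sum_i c_i \di a_i \tens e)$. What survives is the identity
\[\sum_i \big(c_i R_W(a_i e) - c_i a_i R_W(e)\big) = -\sum_i (\sigma_W \wedge \id)(\id \tens \sigma_W)(\di c_i \tens \di a_i \tens e),\]
so the degree two obstruction vanishes on $e$ if and only if (\ref{eqn25}) holds at $e$, giving both directions of the corollary at degree two.

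To promote degree two extendability to all $n \geq 3$, I would note that every relation in $\Omega_A^n$ for the maximal prolongation is a wedge $\xi \wedge \big(\sum_i \di c_i \wedge \di a_i\big) \wedge \eta$ with $\xi, \eta \in \Omega_A^\bullet$, and that the iterated form of (\ref{eqn24}) is associative in the splitting of its form argument, so the $\sigma_W$-image of such a wedge factors through the $\sigma_W$-image of the central degree two piece. Hence once $\sigma_W$ is well defined on $\Omega_A^2 \tens_A W$ it descends automatically to $\Omega_A^n \tens_A W$ for all $n$. The genuine obstacle here is not any calculation but the observation that the last term of Lemma \ref{Lem1} is, up to sign, exactly the degree two extendability obstruction; once this is noticed the corollary follows with no further work.
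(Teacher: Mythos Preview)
Your proposal is correct and follows essentially the same line as the paper's proof: both reduce extendability to the degree two obstruction and then use the second identity of Lemma \ref{Lem1}, summed over a first order relation $\sum_i c_i\,\di a_i=0$, to identify that obstruction with (\ref{eqn25}). Your treatment is in fact slightly more explicit than the paper's in justifying why the degree two case propagates to all $n$ in the maximal prolongation, but the key identity you derive is exactly the paper's equation (\ref{eqn10}) up to an overall sign.
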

\begin{proof}
	To define a map $ \sigma : \Omega_{A}^{2}\tens_{A} W \rightarrow W \tens_{B} \Omega_{B}^{2} $ by (\ref{eqn24}) where $ \xi $, $ \eta \in \Omega_{A}^{1} $, we require the RHS of (\ref{eqn24}) to vanish for all $ \xi \wedge \eta=0 $ (summation implicit). This is easiest if we have as few relations $ \xi \wedge \eta =0 $ as possible, thus we consider the maximal prolongation. In more detail, if we have $ \sum c_{i}\di a_{i}=0 $ in $ \Omega_{A}^{1} $ then $ \sum \di c_{i} \tens \di a_{i} $ is in the kernel of $ \wedge $ and we then have from Lemma \ref{Lem1}
	\begin{align}\label{eqn10}
		\sum \big(c_{i}a_{i} R_{W}(e)-c_{i} R_{W}(a_{i}e)\big)=\sum_{i} (\sigma _{W}\wedge \id)(\id \tens \sigma_{W})(\di c_{i}\tens \di a_{i} \tens e)\ .
	\end{align}
	Thus we need to show that for all $ \sum c_{i}\di a_{i}=0  $ we have the LHS of (\ref{eqn10}) vanishing. 
\end{proof}
	\begin{cor}\label{cor2}
		Either of the following conditions imply the condition (\ref{eqn25}) in Corollary \ref{cor1}:
		\begin{itemize}
			\item [(a)] $ R_{W} $ is a left module map
			\item [(b)] $ \Omega_{A}^{1} $ is the universal calculus.
		\end{itemize}
	\end{cor}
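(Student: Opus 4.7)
My plan is to establish (a) and (b) separately by direct verification of (\ref{eqn25}); both implications are essentially formal once the hypotheses are unpacked.

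For (a), suppose $R_W$ is a left module map, so $R_W(a_i e)=a_i R_W(e)$ for each $i$. Then $c_i R_W(a_i e)=c_i a_i R_W(e)$, and the expression $\sum_i\big(c_i a_i R_W(e)-c_i R_W(a_i e)\big)$ vanishes termwise, regardless of which relation $\sum_i c_i\di a_i=0$ one started with. Note in particular that the relation itself plays no role here.

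For (b), the key is to translate the hypothesis $\sum_i c_i\di a_i=0$ in the universal calculus into an equation in $A\tens A$. Since $\di_{\mathrm{uni}}a=1\tens a-a\tens 1$, and the displayed element already lies in $\ker\cdot$, the relation reads $\sum_i c_i\tens a_i=\sum_i c_i a_i\tens 1$ in $A\tens A$. To exploit this, I would, for the fixed $e\in W$, introduce the $\C$-bilinear assignment $(c,a)\mapsto c\,R_W(a\,e)$, which descends to a well-defined linear map $G:A\tens A\to W\tens_B\Omega_B^2$ because $R_W$ is $\C$-linear and left multiplication by $c$ is $\C$-linear. Applying $G$ to both sides of $\sum_i c_i\tens a_i=\sum_i c_i a_i\tens 1$ yields $\sum_i c_i R_W(a_i e)=\sum_i c_i a_i R_W(1\cdot e)=\sum_i c_i a_i R_W(e)$, which is precisely (\ref{eqn25}).

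Neither part poses a genuine obstacle. The only point requiring a moment's care is verifying that the bilinear assignment in (b) really does descend to a linear map on $A\tens A$ (rather than on a balanced tensor product over $A$, which would force left-linearity of $R_W$ that we do not have); this is immediate from $\C$-bilinearity. With that observation in hand, both implications follow at once from the identification of universal-calculus relations with equalities in $A\tens A$.
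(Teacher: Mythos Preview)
Your proof is correct and follows essentially the same approach as the paper: part (a) is immediate, and for (b) both you and the paper observe that $\sum_i c_i\,\di a_i=0$ in the universal calculus means $\sum_i c_i\tens a_i=\sum_i c_ia_i\tens 1$ in $A\tens A$, from which (\ref{eqn25}) follows at once. Your version merely spells out the passage from this identity to (\ref{eqn25}) via the $\C$-bilinear map $(c,a)\mapsto c\,R_W(a\,e)$, whereas the paper leaves that step implicit.
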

\begin{proof}
	(a) is  obvious from Corollary \ref{cor1}. For (b), by definition of the first order universal calculus we have 
	\begin{align*}
		\sum c_{i} \di a_{i}=c_{i}\tens a_{i}-c_{i}a_{i}\tens 1  \in A\tens A
	\end{align*}
	and if this vanishes then so does the LHS of (\ref{eqn10}). 
\end{proof}
Now we assume extendability for $ \sigma_{W} $ and work out the consequences.
\begin{proposition}\label{prop6}
Given the conditions of Lemma \ref{Lem1} and assuming that $ \sigma_{W} $ is extendable, the map $ S_{W}: \Omega_{A}^{n} \tens_{A} W \rightarrow W \tens_{B} \Omega_{B}^{n+1} $ defined by
	\begin{align}\label{eqn26}
	S_{W}(\xi \tens e)= (\sigma_{W} \wedge \id) (\xi \tens \nabla_{W} e) -(\id \tens\di +\nabla_{W} \wedge\id ) \sigma_{W}(\xi \tens e )(-1)^{|\xi|}+ \sigma_{W} (\di  \xi \tens e)(-1)^{|\xi|}
\end{align}
is a well defined bimodule map, and 
	\begin{align}\label{eqn35}
	S_{W}(\xi \wedge \kappa \tens e)=(\sigma_{W} \wedge \id )(\id \tens S_{W}) (\xi \tens \kappa\tens e)+(-1)^{|\kappa|}(S_{W} \wedge \id )(\id \tens \sigma_{W})(\xi \tens \kappa \tens e)\ .
\end{align}
For the derivative of $ S_{W} $ we have 
\begin{align}\label{eqn34}
	\nabla_{R}^{[|\xi|+1]} S_{W}(\xi \tens e )-S_{W}(\di \xi \tens e)&= -(-1)^{|\xi|} (S_{W}\wedge \id )(\xi \tens \nabla_{W} e )\cr 
	&\quad+(-1)^{|\xi|} \big( (\sigma_{W} \wedge \id) (\id \tens R_{W})-(R_{W}\wedge \id ) \sigma_{W}\big)\ .
\end{align}
\end{proposition}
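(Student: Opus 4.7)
The approach is to verify that formula (\ref{eqn26}) defines a well-defined bimodule map on $\Omega_A^n \tens_A W$, and then to establish the Leibniz-type identity (\ref{eqn35}) and the derivative relation (\ref{eqn34}) as two further direct computations. Throughout, the main ingredients are Lemma \ref{Lem1}, the extendability identity $\sigma_W(\xi \wedge \eta \tens e) = (\sigma_W \wedge \id)(\id \tens \sigma_W)(\xi \tens \eta \tens e)$ from Definition \ref{de1}, the right Leibniz rule $\nabla_W(ae) = \sigma_W(\di a \tens e) + a \nabla_W e$, the derived connection formula $\nabla_W^{[n]}(e \tens \omega) = \nabla_W e \wedge \omega + e \tens \di \omega$, and the curvature identity $\nabla_W^{[n+1]} \nabla_W^{[n]}(e \tens \omega) = R_W(e) \wedge \omega$ from the preliminaries.

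For well-definedness the key check is $S_W(\xi a \tens e) = S_W(\xi \tens ae)$ for $a \in A$. Expanding both sides via (\ref{eqn26}) and using $\nabla_W(ae) = \sigma_W(\di a \tens e) + a \nabla_W e$ together with $\di(\xi a) = (\di \xi) a + (-1)^{|\xi|} \xi \wedge \di a$, all cross terms involving $\di a$ collapse after invoking extendability to rewrite $\sigma_W(\xi \wedge \di a \tens e)$ as $(\sigma_W \wedge \id)(\id \tens \sigma_W)(\xi \tens \di a \tens e)$. Well-definedness on the higher relations of the maximal prolongation is precisely what extendability of $\sigma_W$ encodes, so no extra input is needed. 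The left $A$-module and right $B$-module properties then follow routinely from the same bimodule Leibniz rules for $\nabla_W$ and $\sigma_W$.

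Identity (\ref{eqn35}) is obtained by applying (\ref{eqn26}) to $\xi \wedge \kappa$ and expanding via $\di(\xi \wedge \kappa) = \di \xi \wedge \kappa + (-1)^{|\xi|} \xi \wedge \di \kappa$ together with the extendability identity for $\sigma_W(\xi \wedge \kappa \tens e)$. After regrouping, the terms shaped as $(\sigma_W \wedge \id)(\id \tens \cdots)$ assemble into $(\sigma_W \wedge \id)(\id \tens S_W)(\xi \tens \kappa \tens e)$, while the terms shaped as $(\cdots \wedge \id)(\id \tens \sigma_W)$ assemble into $(-1)^{|\kappa|}(S_W \wedge \id)(\id \tens \sigma_W)(\xi \tens \kappa \tens e)$; the sign $(-1)^{|\kappa|}$ arises from commuting $\di$ or $\nabla_W \wedge \id$ past the $|\kappa|$-form factor via the graded Leibniz rule.

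For (\ref{eqn34}) I apply the derived operator $\id \tens \di + \nabla_W \wedge \id$ (the $\nabla_W^{[|\xi|+1]}$ referred to by $\nabla_R^{[|\xi|+1]}$) to each of the three summands of $S_W(\xi \tens e)$. The first summand produces the $(\sigma_W \wedge \id)(\id \tens R_W)$ contribution via $\nabla_W^{[1]} \nabla_W = R_W$, together with a $(S_W \wedge \id)(\xi \tens \nabla_W e)$-type term that emerges from the graded Leibniz action on $\xi$. The second summand collapses via $\nabla_W^{[k+1]} \nabla_W^{[k]} = R_W \wedge \id$ to give $-(-1)^{|\xi|}(R_W \wedge \id)\sigma_W(\xi \tens e)$. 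The third summand, once $\di^2 \xi = 0$ is used, contributes the $S_W(\di \xi \tens e)$ piece on the left-hand side, after the earlier-established (\ref{eqn35}) is applied to reorganise residual wedge terms. The main obstacle is the sign bookkeeping: keeping track of the $(-1)^{|\xi|}$ and $(-1)^{|\kappa|}$ factors produced by the Koszul rule for $\di$, and confirming that extendability is invoked at each place where a product $\xi \wedge \eta$ appears as the first argument of $\sigma_W$; once these are matched the identities reduce to termwise cancellation.
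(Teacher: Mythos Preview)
Your proposal is correct and follows essentially the same route as the paper's proof: the paper checks well-definedness over $\tens_A$ via the same Leibniz/extendability cancellation you describe, then verifies the right and left module properties by direct computation, reduces (\ref{eqn35}) to ``the Leibniz rule for $\di$ and extendability,'' and obtains (\ref{eqn34}) by the identity $S_W(\di a \tens e)=R_W(ae)-aR_W(e)$ together with standard manipulations. Your write-up is in fact more detailed than the paper's; the only organisational difference is that for (\ref{eqn34}) the paper first isolates the degree-one case via that curvature identity, whereas you expand $\nabla_W^{[|\xi|+1]}$ on the three summands directly, but both amount to the same computation.
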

\begin{proof}
	To check that it is well defined we use 
	\begin{align*}
		S_{W}(\xi a \tens e)-S_{W}(\xi \tens a e)&=- (\sigma_{W} \wedge \id)(\xi \tens \sigma_{W}(\di a \tens e ))\cr
		&\quad +\sigma_{W} \big( (d(\xi a)-(\di \xi) a) \tens e\big)(-1)^{|\xi|}\cr
		&=-\sigma_{W}(\xi \wedge \di a \tens e)+ \sigma_{W}(\xi \wedge \di a \tens e )=0 
	\end{align*}
by Definition \ref{de1}. To check that it is a right module map we use, where $ \sigma_{W}(\xi \tens e )=f\tens \eta $
\begin{align*}
	S(\xi \tens e a)- S(\xi \tens e)a&= (\sigma_{W} \wedge \id ) (\xi \tens e \tens \di a )-(\id \tens \di )(\sigma_{W} (\xi \tens e)a)(-1)^{|\xi|}\cr
	&\quad +\big((\id \tens \di )\sigma_{W} (\xi \tens e)\big) a (-1)^{|\xi|}\cr 
	&=  f \tens \eta \wedge \di a - f \tens \di (\eta a)(-1)^{|\xi|}+f \tens \di \eta . a (-1)^{|\xi|}\ .
\end{align*}
To check that it is a left module map we use  
\begin{align*}
(-1)^{|\xi|}	\big(S(a \xi \tens e)-a S(\xi \tens e)\big)&= -(\nabla_{W} \wedge \id )(a . \sigma_{W}(\xi \tens e))+a (\nabla_{W} \wedge \id ) (\sigma_{W} (\xi \tens e ))\cr
&\quad +\sigma_{W} (\di (a \xi )\tens e )- \sigma_{W}(a. \di \xi\tens e)\cr 
	&=-\nabla_{W} (a f) \wedge \eta +a \nabla_{W} (f) \wedge \eta+\sigma_{W} (\di a \wedge \xi \tens e)\cr 
	&=-\sigma_{W} (\di a \tens f )\wedge \eta+\sigma_{W} (\di a \wedge  \xi \tens e  )=0\ .
\end{align*}
To verify the product rule for $ S_{E} $, consider
\begin{align*}
	S_{W}(\xi \wedge \kappa \tens e)- (\sigma_{W} \wedge \id )(\id \tens S_{W}) (\xi \tens \kappa  \tens e)
\end{align*}
and use the Leibniz rule for $ \di $ and extendability. For the last formula (\ref{eqn34}) we use 
\begin{align}\label{eqn36}
	S_{W} (\di a \tens e)&= R_{W}(a e)-a R_{W} (e)
\end{align}
and standard manipulations. Recall that $ R_{W} $ is not necessarily a left module map, but use of (\ref{eqn35}) shows that (\ref{eqn34}) is well defined on $ \Omega_{A} \tens_{A} W $.
\end{proof}

Now suppose that $ A $ and $ B $ are $ * $-algebras with $ * $-calculi. Given an inner product $ \langle , \rangle : \overline{W}\tens_{A} W\rightarrow B $ which is preserved by $ \nabla_{W} $ we extend $ \phi:A \rightarrow B $ defined by $ \phi (a)= \langle \bar{e} , a e\rangle $ where $ \nabla_{W} \, e=0 $ to $ \phi : \Omega_{A}^{n} \rightarrow \Omega_{B}^{n} $ by 
\begin{align} \label{eqn30}
	 \phi(\xi)= (\langle, \rangle \tens \id ) (\bar{e} \tens \sigma_{W}(\xi \tens e))\ .
\end{align}
Under the more restrictive conditions where $ R_{W} $ is a bimodule map \cite{BMbook} $ \phi $ would be a cochain map. However, more generally we find a correction term.
\begin{proposition}\label{prop3}
Assume the conditions of \ref{Lem1} and that $ \sigma_{W} $  is extendable.	If $ \nabla_{W} e = 0 $ and $ \nabla_{W} $ preserves the inner product then
	\begin{align} \label{eqn44}
		\di \phi (\xi)= \phi (\di \xi)- (-1)^{|\xi|}\, (\langle,\rangle \tens \id ) (\bar{e}\tens S_{W} (\xi\tens e))\ .
	\end{align}
\end{proposition}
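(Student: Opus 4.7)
The plan is to compute $\di\phi(\xi)$ directly by writing $\sigma_W(\xi\tens e)=f\tens\eta$ (sum implicit, $\eta\in\Omega_B^{|\xi|}$) so that $\phi(\xi)=\langle\bar e,f\rangle\wedge\eta$, applying the graded Leibniz rule for $\di$ on $\Omega_B$, and then using the inner-product preservation identity (\ref{eqn41}) together with the hypothesis $\nabla_W e=0$.

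First I would note that $\nabla_W e=0$ forces $\nabla_{\overline W}\bar e=0$ by the definition of the conjugate connection (if $\nabla_W e=g\tens\xi$ then $\nabla_{\overline W}\bar e=\xi^{*}\tens\bar g$). Consequently, applying (\ref{eqn41}) to the pairing $\langle\bar e,f\rangle\in B$ collapses the $\nabla_{\overline W}$ term and yields
\begin{align*}
\di\langle\bar e,f\rangle=(\langle,\rangle\tens\id)(\bar e\tens\nabla_W f).
\end{align*}
Combining this with $\di(\langle\bar e,f\rangle\wedge\eta)=\di\langle\bar e,f\rangle\wedge\eta+\langle\bar e,f\rangle\wedge\di\eta$ and factoring out the pairing shows
\begin{align*}
\di\phi(\xi)=(\langle,\rangle\tens\id)\bigl(\bar e\tens(\nabla_W f\wedge\eta+f\tens\di\eta)\bigr)=(\langle,\rangle\tens\id)\bigl(\bar e\tens(\id\tens\di+\nabla_W\wedge\id)\sigma_W(\xi\tens e)\bigr).
\end{align*}

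Next, I would revisit the definition (\ref{eqn26}) of $S_W$ and substitute $\nabla_W e=0$, which kills the first term, leaving
\begin{align*}
(-1)^{|\xi|}S_W(\xi\tens e)=-(\id\tens\di+\nabla_W\wedge\id)\sigma_W(\xi\tens e)+\sigma_W(\di\xi\tens e).
\end{align*}
Rearranging this identity isolates $(\id\tens\di+\nabla_W\wedge\id)\sigma_W(\xi\tens e)=\sigma_W(\di\xi\tens e)-(-1)^{|\xi|}S_W(\xi\tens e)$. Feeding this expression into the previous display and recognising the first resulting term as $\phi(\di\xi)$ (by definition (\ref{eqn30})) delivers exactly (\ref{eqn44}).

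The only subtle point is the derivation of $\nabla_{\overline W}\bar e=0$ from $\nabla_W e=0$ and the correct use of (\ref{eqn41}), which requires handling the conjugate bimodule conventions carefully; after that the argument is essentially bookkeeping with the Leibniz rule. No new use of extendability is needed beyond the fact that (\ref{eqn26}) is well defined, which was already established in Proposition \ref{prop6}.
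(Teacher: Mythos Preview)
Your argument is correct and is precisely the route the paper indicates: you differentiate $\phi(\xi)=(\langle,\rangle\tens\id)(\bar e\tens\sigma_W(\xi\tens e))$ via the Leibniz rule and inner-product preservation (which is what the paper means by ``differentiating (\ref{eqn30})''), and then substitute the definition (\ref{eqn26}) of $S_W$ with $\nabla_W e=0$. Your write-up simply makes explicit the two steps the paper compresses into one sentence.
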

\begin{proof}
	Apply (\ref{eqn26}) to the formula obtained by differentiating (\ref{eqn30}).
\end{proof}
In Proposition \ref{prop4} we see that under the condition of Lemma \ref{Lem1} there is a functor $ \tens W $ from $ \mathcal{E}_{A} $ to $ \mathcal{E}_{B} $, using the specified connection on the tensor product. 
We would like to calculate the curvature of this tensor product connection, but as we noted before the curvature of $ W$ is not necessarily a left module map, so we need more generality than in \cite{BMbook}.
\begin{proposition}\label{prop7}
	If $ F \in \mathcal{E}_{A} $ and $ (\nabla_{W}, \sigma_{W}) $ is an extendable right bimodule connection on $ W \in{} _{A}\mathcal{M}_{B} $ then the curvature of the tensor product connection is 
	\begin{equation}\label{eqn19}
		R_{F\tens W}=\id \tens R_{W}+(\id \tens \sigma_{W})(R_{F}\tens \id )+(\id \tens S_{W})(\nabla_{F}\tens \id ))\ .
	\end{equation}
	Note: The first and last terms are not well defined on $ F\tens_{A} W $, only their sum is.
\end{proposition}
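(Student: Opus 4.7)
The plan is to expand $R_{F\tens W}(f\tens e) = \nabla^{[1]}_{F\tens W}\nabla_{F\tens W}(f\tens e)$ directly from the tensor-product formula in Proposition \ref{prop4} and the Leibniz-type rule for $\nabla^{[1]}$, then collect the resulting terms into the three pieces on the right-hand side of (\ref{eqn19}).

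First I would write $\nabla_F(f)=f_i\tens\alpha_i$, $\sigma_W(\alpha_i\tens e)=e'_i\tens\gamma_i$, and $\nabla_W(e)=e^j\tens\beta_j$ (sums implicit), so that $\nabla_{F\tens W}(f\tens e) = f_i\tens e'_i\tens\gamma_i + f\tens e^j\tens\beta_j$. Applying $\nabla^{[1]}_{F\tens W}$ to each summand yields six pieces; two of them, $f\tens\nabla_W(e^j)\wedge\beta_j$ and $f\tens e^j\tens\di\beta_j$, combine to give $f\tens R_W(e)$ directly.

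Next I would invoke the defining formula (\ref{eqn26}) for $S_W$ in degree one, which reads
\begin{equation*}
S_W(\alpha_i\tens e) \;=\; (\sigma_W\wedge\id)(\alpha_i\tens\nabla_W e) + (\id\tens\di+\nabla_W\wedge\id)\sigma_W(\alpha_i\tens e) - \sigma_W(\di\alpha_i\tens e).
\end{equation*}
The three $S_W$-pieces account exactly for three of the remaining four expanded terms, leaving a spurious $-f_i\tens\sigma_W(\di\alpha_i\tens e)$. For the fourth leftover, $(\id\tens\sigma_W)(\nabla_F(f_i)\tens e'_i)\wedge\gamma_i$, I would use extendability (Definition \ref{de1}) applied to $R_F(f) = \nabla_F(f_i)\wedge\alpha_i + f_i\tens\di\alpha_i$: the first piece, after $(\id\tens\sigma_W)$, becomes precisely $(\id\tens\sigma_W)(\nabla_F(f_i)\tens e'_i)\wedge\gamma_i$, while the second piece contributes the compensating $+f_i\tens\sigma_W(\di\alpha_i\tens e)$. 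The two $\sigma_W(\di\alpha_i\tens e)$ terms cancel, yielding (\ref{eqn19}).

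The hard part, and the content of the note, is well-definedness on $F\tens_A W$. The map $\id\tens R_W$ fails to descend because $R_W$ is only a right, not a left, bimodule map, and $(\id\tens S_W)(\nabla_F\tens\id)$ fails because $\nabla_F$ obeys the Leibniz rule with $A$. I would verify that these two obstructions cancel: for $a\in A$, formula (\ref{eqn36}) gives $fa\tens R_W(e) - f\tens R_W(ae) = -f\tens S_W(\di a\tens e)$, while the Leibniz rule for $\nabla_F$ combined with the bimodule property of $S_W$ from Proposition \ref{prop6} gives $(\id\tens S_W)(\nabla_F(fa)\tens e) - (\id\tens S_W)(\nabla_F(f)\tens ae) = +f\tens S_W(\di a\tens e)$. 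The two discrepancies sum to zero, so the right-hand side of (\ref{eqn19}) descends to $F\tens_A W$ even though its first and third summands do not individually.
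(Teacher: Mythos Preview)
Your proposal is correct and is exactly the ``standard manipulation'' the paper refers to but does not write out: you expand $\nabla^{[1]}_{F\tens W}\nabla_{F\tens W}$ using Proposition~\ref{prop4}, regroup via the degree-one case of (\ref{eqn26}), and use extendability to match the $R_F$ term, with the $\sigma_W(\di\alpha_i\tens e)$ contributions cancelling as you describe. Your well-definedness argument via (\ref{eqn36}) and the bimodule property of $S_W$ from Proposition~\ref{prop6} is also correct and fills in what the paper leaves implicit in the Note.
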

\begin{proof}
	Standard manipulation.
\end{proof}
\subsection{Applications to the state map on matrices}
We return to our specific case of matrices, projective space and bimodule $ E $. As we are using the universal calculus for matrices, by Corollary \ref{cor2} we know that $ \sigma_{E} $ from Section \ref{Se2} is extendable. It will be convenient to extend the domain of definition of $ \sigma_{E} $ given in proposition \ref{prop2} from $ \Omega_{\mathrm{uni}}^{1} (M_{n}) $ to $ \Omega_{\mathrm{uni}}^{m-1} (M_{n}) $ etc. 
\begin{proposition}\label{prop5}
	Regarding $ \Omega_{\mathrm{uni}}^{m-1} (M_{n}(\C)) $ as a subset of $ M_{n} (\C) ^{\tens{m}} $ we find the formula 
	\begin{equation*}
		\hat{\sigma}_{E} : M_{n} ^{\tens{m}}  \tens_{M_{n}(\C)} E \rightarrow E \tens  \Omega^{m-1}(\C \mathbb{P}^{n-1})
	\end{equation*}
	which restricts to the extension of 
	\begin{equation*}
		\sigma_{E} : \Omega^{m-1}_{\mathrm{uni}} (M_{n} (\C)) \tens_{M_{n} (\C)} E \rightarrow E \tens\Omega^{m-1} (\C\mathbb{P}^{n-1})
	\end{equation*}
	from Section \ref{Se2} given by 
	\begin{align*}
		\hat{\sigma}_{E}\big(E_{a_{1} b_{1}} \tens E_{a_{2} b_{2}}\tens \dots \tens E_{a_{m} b_{m}} \tens h_{i}\tens \bar{v} _{j}\big)=\delta _{b_{m} i} \,h_{a_{1} } \tens \bar{v}_{q_{1}} \tens \Gamma^{b_{1} q_{1}}{}_{a_{2} q_{2}} \wedge \Gamma^{b_{2} q_{2}}{}_{a_{3} q_{3}} \wedge \dots \wedge \Gamma^{b_{m-1}\, q_{m-1}}{}_{a_{m}\,  j} \ .
	\end{align*}
\end{proposition}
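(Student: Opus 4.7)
The plan is to proceed by induction on $m$, with base case $m=2$ being Proposition~\ref{prop2}. For the inductive step I use the iteration afforded by extendability (\ref{eqn24}),
\[
\hat{\sigma}_E^{(m)} \;=\; \bigl(\hat{\sigma}_E^{(2)} \wedge \id\bigr) \bigl(\id_{M_n^{\tens 2}} \tens \hat{\sigma}_E^{(m-1)}\bigr),
\]
which is legitimate on the universal calculus by Corollary~\ref{cor2}(b). To apply it I rewrite the input by inserting $1 \in M_n$:
\[
E_{a_1 b_1} \tens E_{a_2 b_2} \tens \dots \tens E_{a_m b_m} \tens h_i \tens \bar{v}_j
\;=\; E_{a_1 b_1} \tens 1 \tens_{M_n} E_{a_2 b_2} \tens \dots \tens E_{a_m b_m} \tens h_i \tens \bar{v}_j,
\]
which sits naturally in the decomposition $M_n^{\tens m} \tens_{M_n} E \cong M_n^{\tens 2} \tens_{M_n}(M_n^{\tens(m-1)} \tens_{M_n} E)$ via multiplication of the middle factors.

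By the inductive hypothesis, $\hat{\sigma}_E^{(m-1)}$ sends the right-hand factor to
\[
\delta_{b_m i}\, h_{a_2} \tens \bar{v}_{q_2} \tens \Gamma^{b_2 q_2}{}_{a_3 q_3} \wedge \dots \wedge \Gamma^{b_{m-1} q_{m-1}}{}_{a_m j}.
\]
Applying $\hat{\sigma}_E^{(2)}$ of Proposition~\ref{prop2} to $E_{a_1 b_1} \tens 1 \tens h_{a_2} \tens \bar{v}_{q_2}$, with $1 = \sum_k E_{kk}$, yields $h_{a_1} \tens \bar{v}_{q_1} \tens \Gamma^{b_1 q_1}{}_{a_2 q_2}$; wedging this $\Omega^1$ output against the existing $\Gamma$ chain produces precisely the formula claimed in Proposition~\ref{prop5}.

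The remaining point to verify is well-definedness on the quotient $\tens_{M_n}$ at the interface between $E_{a_m b_m}$ and $h_i$, namely that an element $M \in M_n$ may be moved across. Writing $M = \sum_{cd} M_{cd} E_{cd}$, the identities $E_{a_m b_m} E_{cd} = \delta_{b_m c} E_{a_m d}$ and $E_{cd} h_i = \delta_{d i} h_c$ both collapse through the $\delta_{b_m i}$ factor in the formula to yield the same contraction with $\Gamma^{b_{m-1} q_{m-1}}{}_{a_m j}$; the left $M_n$-module and right $C^{\infty}$-module properties are then immediate from the shape of the formula. The main obstacle is the bookkeeping of the summation indices $q_1,\dots,q_{m-1}$ through the wedge chain, but the choice of split above organizes this mechanically.
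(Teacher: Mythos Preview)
Your proof is correct and follows essentially the same approach as the paper: induction on $m$ with base case $m=2$ from Proposition~\ref{prop2}, and the inductive step obtained by peeling off the leftmost factor via the extendability iteration and Corollary~\ref{cor2}(b). The only cosmetic difference is that the paper takes a generic $\xi=E_{ab}\tens E_{st}\in\Omega^1_{\mathrm{uni}}(M_n)$ as the left piece and then contracts over $M_n$ with the inductive $\eta$, whereas you insert $1=\sum_k E_{kk}$ to write $E_{a_1 b_1}\tens 1\tens_{M_n}(\text{rest})$; both devices produce the same $\hat{\sigma}_E^{(2)}$ evaluation $h_{a_1}\tens\bar v_{q_1}\tens\Gamma^{b_1 q_1}{}_{a_2 q_2}$ and the same wedge chain.
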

\begin{proof}
	By induction. From Proposition \ref{prop2} the formula works for $ m=2 $. Assume that it works for $ m $, and then for $ m+1 $, given $ \xi \in \Omega^{1}_{\mathrm{uni}} (M_{n}(\C))$ and $ \eta = E_{a_{i} b_{1} } \tens  E_{a_{2} b_{2} } \tens \dots  \tens E_{a_{m} b_{m} } \in \Omega^{m-1}_{\mathrm{uni}} (M_{n}(\C))$
	\begin{align*}
		\sigma _{E}(\xi  \wedge \eta \tens h_{i} \tens \bar{v}_{j})&=(\id \tens \wedge ) \sigma_{E}(\xi  \wedge \eta \tens h_{i} \tens \bar{v}_{j})\cr
		&=(\sigma_{E} \wedge \id ) (\xi \tens\sigma_{E}( \eta \tens h_{i} \tens \bar{v}_{j}))\cr
		&= \delta_{b_{m} i} \, \sigma_{E} (\xi \tens h_{a_{1}} \tens \bar{v}_{q_{1}}) \wedge \Gamma^{b_{1} q_{1}}{}_{a_{2} q_{2}} \wedge \Gamma^{b_{2} q_{2}}{}_{a_{3} q_{3}} \wedge \dots \wedge \Gamma^{b_{m-1}\, q_{m-1}}{}_{a_{m}\,  j} \ .
	\end{align*}
	Now put $ \xi= E_{a b} \tens E_{st} $ to get 
	\begin{align*}
		\sigma_{E} (\xi  \wedge \eta \tens h_{i} \tens \bar{v}_{j})
		&= \delta_{b_{m} i} \, 	\hat{\sigma} _{E}(E_{a b} \tens E_{st} \tens h_{a_{1}} \tens \bar{v}_{q_{1}}) \wedge \Gamma^{b_{1} q_{1}}{}_{a_{2} q_{2}} \wedge \Gamma^{b_{2} q_{2}}{}_{a_{3} q_{3}} \wedge \dots \wedge \Gamma^{b_{m-1}\, q_{m-1}}{}_{a_{m}\,  j} \cr
		&= \delta_{b_{m} i} \, \delta_{ta_{1}} h_{a} \tens \bar{v}_{q_{0}}\tens \Gamma^{bq_{0}}{}_{s q_{1}} \wedge \Gamma^{b_{1} q_{1}}{}_{a_{2} q_{2}} \wedge \dots \wedge \Gamma^{b_{m-1}\, q_{m-1}}{}_{a_{m}\,  j} 
	\end{align*}
	and this is exactly what the formula gives on applying $ \hat{\sigma}_{E} $ to $ \xi \wedge \eta \tens h_{i} \tens\bar{v}_{j} $  given 
	\begin{align*}
		E_{ab} \tens E_{st} \tens_{M_{n} (\C)} \xi=  \delta_{ta_{1}}E_{ab} \tens E_{s b_{1} } \tens E_{a_{2} b_{2}} \tens \dots \tens E_{a_{m} b_{m}}\ .
	\end{align*}
\end{proof}
We can now extend the state evaluation map $ \phi: M_{n}(\C) \rightarrow C(\C \mathbb{P}^{n-1}) $ from (\ref{eqn23}) and (\ref{eqn31}) to forms by using (\ref{eqn30}).
\begin{cor}\label{cor4}
	The function $ \phi: \Omega_{\mathrm{uni}}^{m-1} (M_{n}(\C))  \rightarrow \Omega^{m-1}(\C \mathbb{P}^{n-1})$ is given by 
	\begin{align*}
		\phi(E_{a_{1} b_{1} }\tens\dots \tens  E_{a_{m} b_{m}})=v_{a_{1}} \bar{v}_{q_{1}}  \Gamma^{b_{1} q_{1}}{}_{a_{2} q_{2}} \wedge \dots \wedge  \Gamma^{b_{m-1} q_{m-1}}{}_{a_{m} b_{m}} 
	\end{align*}
	summing over $ q_{1}, \dots q_{m-1} $.
\end{cor}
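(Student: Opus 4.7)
The plan is to prove Corollary \ref{cor4} by a direct computation, feeding the formula for $\hat{\sigma}_E$ from Proposition \ref{prop5} into the definition (\ref{eqn30}) of $\phi$ on forms and then applying the explicit inner product formula (\ref{eqn14}). Recall that the preferred vector is $e = e_{1\tens 1} = h_i \tens \bar{v}_i$ (summed over $i$), which satisfies $\nabla_E(e) = 0$ by (\ref{eqn12}), so $\phi$ is well-defined on forms by Proposition \ref{prop3}.

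First, I would take $\xi = E_{a_1 b_1}\tens\cdots\tens E_{a_m b_m}$, viewed as an element of $\Omega_{\mathrm{uni}}^{m-1}(M_n(\C)) \subset M_n(\C)^{\tens m}$, and compute
\[
\sigma_E(\xi \tens e) = \sum_i \hat{\sigma}_E\bigl(E_{a_1 b_1}\tens\cdots\tens E_{a_m b_m}\tens h_i\tens \bar{v}_i\bigr)
\]
using Proposition \ref{prop5}. The factor $\delta_{b_m i}$ collapses the sum over $i$ to the single term $i = b_m$, giving
\[
\sigma_E(\xi \tens e) = h_{a_1}\tens \bar{v}_{q_1}\tens \Gamma^{b_1 q_1}{}_{a_2 q_2}\wedge\cdots\wedge \Gamma^{b_{m-1} q_{m-1}}{}_{a_m b_m}\ .
\]

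Second, I would apply $(\langle,\rangle\tens\id)(\bar{e}\tens -)$. Writing $\bar{e} = \sum_s \overline{h_s\tens \bar{v}_s}$ and using (\ref{eqn14}), the pairing
\[
\langle \overline{h_s\tens \bar{v}_s},\, h_{a_1}\tens \bar{v}_{q_1}\rangle = \delta_{s a_1}\, v_s\, \bar{v}_{q_1}
\]
collapses the sum over $s$ and produces the scalar factor $v_{a_1}\bar{v}_{q_1}$. Pulling this scalar through the tensor product against the wedge of $\Gamma$s gives exactly the stated formula.

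The argument is essentially mechanical once Proposition \ref{prop5} is in hand; the only point to watch is that the two Kronecker deltas fire correctly (one from $\hat{\sigma}_E$ matching $b_m$ with $i$, and one from the inner product matching $s$ with $a_1$), and that both sums are the implicit sums coming from $e = h_i\tens\bar{v}_i$ and $\bar{e} = \overline{h_s\tens\bar{v}_s}$. No nontrivial obstacle arises, since extendability of $\sigma_E$ on the universal calculus (needed to make sense of $\phi$ on $\Omega_{\mathrm{uni}}^{m-1}$) has already been secured by Corollary \ref{cor2}(b).
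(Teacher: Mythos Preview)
Your proposal is correct and is essentially identical to the paper's own proof: both plug $e_{1\tens1}=h_i\tens\bar v_i$ and $\bar e=\overline{h_j\tens\bar v_j}$ into (\ref{eqn30}), apply Proposition~\ref{prop5} to produce the $\delta_{b_m i}$ and the wedge of $\Gamma$'s, and then use (\ref{eqn14}) to collapse the inner product to $\delta_{j a_1}v_j\bar v_{q_1}$. The only cosmetic difference is your choice of dummy index $s$ in place of the paper's $j$.
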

\begin{proof}
		Summing over $ i,j $
	\begin{align*}
		\phi(E_{{a_{1} b_{1}} } \tens\dots \tens E_{a_{m} b_{m}})&= (\langle ,\rangle \tens \id )(\id \tens \hat{\sigma }_{E})\big(\overline{h_{j} \tens \bar{v}_{j}} \tens E_{a_{1} b_{1}} \tens \dots \tens E_{a_{m} b_{m}} \tens h_{i} \tens \bar{v}_{i}\big)\cr
		&=  \delta_{b_{m} i} \langle \overline{h_{j} \tens \bar{v}_{j}} , h_{a_{1}} \tens \bar{v}_{q_{1}}\rangle  \Gamma^{b_{1} q_{1}}{}_{a_{2} q_{2}} \wedge  \Gamma^{b_{2} q_{2}}{}_{a_{3} q_{3}} \wedge \dots \wedge  \Gamma^{b_{m-1} q_{m-1}}{}_{a_{m} i}    \cr
		&=  \delta_{b_{m} i}  \delta_{ja _{1}}  v_{j} \bar{v}_{q_{1}} \Gamma^{b_{1} q_{1}}{}_{a_{2} q_{2}} \wedge  \Gamma^{b_{2} q_{2}}{}_{a_{3} q_{3}} \wedge \dots \wedge  \Gamma^{b_{m-1} q_{m-1}}{}_{a_{m} i} \ .
	\end{align*}
\end{proof}
\begin{proposition}\label{prop9}
	Similarly to $ \hat{\sigma}_{E}  $, we can calculate an extension $ \hat{S}_{E} $ of $ S_{E} $ to $ M_{n}\tens M_{n} $ instead of just $ \Omega_{\mathrm{uni}}^{1} (M_{n}) $, giving
	\begin{align*}
		\hat{S}_{E}(E_{a b} \tens E_{rt} \tens h_{i} \tens \bar{v}_{j})&=\delta_{ti} h_{a} \tens \bar{v}_{q} \tens X^{bq}{}_{r j}
	\end{align*}
and this extends to higher forms by
		\begin{align*}
		&\hat{S}_{E}\big(E_{a_{1} b_{1}} \tens E_{a_{2} b_{2}}\tens \dots \tens E_{a_{m} b_{m}} \tens h_{i}\tens \bar{v} _{j}\big)=\delta _{b_{m} i} \,h_{a_{1} } \tens \bar{v}_{q_{1}} \tens\cr
		&\left( \begin{array}{c}
		\Gamma^{b_{1} q_{1}}{}_{a_{2} q_{2}} \wedge \Gamma^{b_{2} q_{2}}{}_{a_{3} q_{3}} \wedge \dots \wedge X^{b_{m-1}\, q_{m-1}}{}_{a_{m}\,  j}+\dots  \\ +(-1)^{m-3} \Gamma^{b_{1} q_{1}}{}_{a_{2} q_{2}} \wedge X^{b_{2} q_{2}}{}_{a_{3} q_{3}} \wedge \dots \wedge \Gamma^{b_{m-1}\, q_{m-1}}{}_{a_{m}\,  j} 	\\+ (-1)^{m-2} X^{b_{1} q_{1}}{}_{a_{2} q_{2}} \wedge \Gamma^{b_{2} q_{2}}{}_{a_{3} q_{3}} \wedge \dots \wedge \Gamma^{b_{m-1}\, q_{m-1}}{}_{a_{m}\,  j} 			
		\end{array}
		\right) 
	\end{align*}
where the wedge products alternate in sign and contain exactly one $ X $ factor.
\end{proposition}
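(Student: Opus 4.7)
The approach is induction on $m$, structurally mirroring the proof of Proposition~\ref{prop5} for $\hat{\sigma}_E$ but with the product rule (\ref{eqn35}) for $S_E$ playing the role of the extendability identity (\ref{eqn24}).

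For the base case $m=2$, I begin with (\ref{eqn36}), giving
$S_E(\di E_{st} \tens h_i \tens \bar{v}_j) = R_E(E_{st} h_i \tens \bar{v}_j) - E_{st} R_E(h_i \tens \bar{v}_j)$,
whose right-hand side is
$\delta_{ti} h_p \tens \bar{v}_q \tens X^{pq}{}_{sj} - \delta_{tp} h_s \tens \bar{v}_q \tens X^{pq}{}_{ij}$
by (\ref{eqn16}). Expanding the universal differential as
$\di E_{st} = E_{pp} \tens E_{st} - E_{st} \tens E_{pp}$
(summed over $p$) and substituting the proposed formula
$\hat{S}_E(E_{ab} \tens E_{rt} \tens h_i \tens \bar{v}_j) = \delta_{ti}\, h_a \tens \bar{v}_q \tens X^{bq}{}_{rj}$
into each summand, I check term by term that the two expressions coincide. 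This pins down the base case and simultaneously confirms that $\hat{S}_E$ extends $S_E$ on $\Omega^{1}_{\mathrm{uni}}(M_n) \subset M_n \tens M_n$.

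For the inductive step I promote (\ref{eqn35}) to an identity for the extended maps. With $\xi = E_{ab} \tens E_{st} \in \Omega^1_{\mathrm{uni}}(M_n)$ and $\eta = E_{a_1 b_1} \tens \cdots \tens E_{a_m b_m} \in \Omega^{m-1}_{\mathrm{uni}}(M_n)$, the same bimodule manipulations used to derive (\ref{eqn35}) yield
\begin{align*}
S_E(\xi \wedge \eta \tens h_i \tens \bar{v}_j)
&= (\hat{\sigma}_E \wedge \id)\bigl(\xi \tens \hat{S}_E(\eta \tens h_i \tens \bar{v}_j)\bigr) \\
&\quad + (-1)^{m-1}(\hat{S}_E \wedge \id)\bigl(\xi \tens \hat{\sigma}_E(\eta \tens h_i \tens \bar{v}_j)\bigr).
\end{align*}
Substituting the inductive hypothesis for $\hat{S}_E$ on $\eta$ (which carries the alternating-sign wedge of $m-2$ copies of $\Gamma$ and one $X$) and the Proposition~\ref{prop5} formula for $\hat{\sigma}_E$, the first summand produces all level-$(m+1)$ terms with the unique $X$ in positions $2,\dots,m$, the inherited sign $(-1)^{(m-1)-k}$ at position $k$ of $\eta$ becoming $(-1)^{m-(k+1)}$ at position $k+1$ in $\xi \wedge \eta$. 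The second summand contributes the single remaining term, with $X$ placed in the leftmost position by the base case, and overall sign $(-1)^{m-1}$, as required.

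The main obstacle is the sign and index bookkeeping: one has to verify that the alternating signs transform as claimed under the positional shift $k \mapsto k+1$ produced by $\hat{\sigma}_E \wedge \id$, and that the contraction indices $q_1, \dots, q_m$ thread cleanly through the wedge of one $X$ with the surrounding $\Gamma$'s. A secondary subtlety is well-definedness of the extension: $\hat{S}_E$ on all of $M_n^{\tens m}$ must restrict to $S_E$ on the subspace $\Omega^{m-1}_{\mathrm{uni}}(M_n)$. This follows because Corollary~\ref{cor2}(b) makes the universal calculus automatically extendable, so the extended $\hat{\sigma}_E$ and $\hat{S}_E$ inherit the same product rule as $\sigma_E$ and $S_E$, and agreement at the base case propagates through the induction.
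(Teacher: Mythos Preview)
Your proposal is correct and follows essentially the same approach as the paper: the base case is obtained from (\ref{eqn36}) and (\ref{eqn16}), and the inductive step mirrors Proposition~\ref{prop5} via the product rule (\ref{eqn35}) of Proposition~\ref{prop6}. The only cosmetic difference is that the paper packages the base-case check through the left-module identity (\ref{eqn28}) rather than by expanding $\di E_{st}$ directly, but this amounts to the same computation.
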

\begin{proof}
	We use (\ref{eqn16}) and (\ref{eqn36}) to find the first equation, using 
		\begin{align}\label{eqn28}
		E_{ab} S_{E} (\di E_{rt} \tens h_{i} \tens \bar{v}_{j})=\hat{S} _{E } \big( (E_{a b} \tens E_{rt} -E_{at} \tens 1\delta_{b r}) \tens h_{i} \tens \bar{v}_{j}\big)\ .  
	\end{align}
The rest is a proof by induction similarly to Proposition \ref{prop5} using Proposition \ref{prop6}.
\end{proof}
\section{Matrix modules and sheaves on $ \C \mathbb{P}^{n-1} $}
\subsection{Differentiating the state evaluation map}
We would like the state evaluation map extended to forms in Corollary \ref{cor4} to be a cochain map, i.e. $ \di \phi(\xi)=\phi (\di \xi) $. However, Proposition \ref{prop3} gives an additional term which we must evaluate. 
\begin{proposition}
	For the usual calculus on projective space the state evaluation map (\ref{eqn31}) is not a cochain map to the standard $ \di $ calculus on $ \C \mathbb{P}^{n-1} $.
\end{proposition}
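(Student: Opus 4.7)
The strategy is to apply Proposition \ref{prop3} and exhibit one specific $\xi \in \Omega^{1}_{\mathrm{uni}}(M_{n}(\C))$ for which the correction term
\[
-(-1)^{|\xi|}(\langle,\rangle \tens \id)(\bar{e}\tens S_{E}(\xi \tens e))
\]
is a nonzero $2$-form on $\C\mathbb{P}^{n-1}$, forcing $\di\phi(\xi)\neq \phi(\di\xi)$. First I would check that the hypotheses of Proposition \ref{prop3} hold in our concrete setting: $\sigma_{E}$ is extendable by Corollary \ref{cor2}(b) since we use the universal calculus on $M_{n}$; the vector $e = e_{1\tens 1} = h_{i}\tens\bar{v}_{i}$ satisfies $\nabla_{E}(e)=0$ by (\ref{eqn12}); and the connection from Section \ref{sub1} preserves the inner product by (\ref{eqn5}).

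Next, I would take the test element $\xi = E_{11}\tens E_{22}$, which lies in $\Omega^{1}_{\mathrm{uni}}(M_{n}(\C))$ because $E_{11}E_{22}=0$; it represents $E_{11}\di E_{22}$. Using the explicit extension $\hat{S}_{E}$ from Proposition \ref{prop9} together with the curvature formula $X^{bq}{}_{ij}=\delta_{qj}\,\di\bar{v}_{b}\wedge \di v_{i}$ from (\ref{eqn39}), a direct substitution yields
\[
S_{E}(E_{11}\di E_{22}\tens e) \;=\; h_{1}\tens \bar{v}_{2}\tens \di\bar{v}_{1}\wedge \di v_{2}.
\]
Applying $(\langle,\rangle\tens\id)(\bar{e}\tens -)$ and using (\ref{eqn14}) gives, after the sign from Proposition \ref{prop3},
\[
\di\phi(\xi) - \phi(\di\xi) \;=\; v_{1}\bar{v}_{2}\,\di\bar{v}_{1}\wedge \di v_{2}.
\]

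The only step that is not mechanical is verifying that this 2-form is genuinely nonzero in $\Omega^{2}(\C\mathbb{P}^{n-1})$, since the relations $\sum v_{i}\di \bar{v}_{i}=0$, $\sum \bar{v}_{i}\di v_{i}=0$ and $\sum \di v_{i}\wedge\di\bar{v}_{i}=0$ collapse many apparently distinct combinations. It suffices to check $n=2$, and I would do this by pulling the form back to $S^{3}$ along a local two-parameter section such as $v_{1}=\cos s$, $v_{2}=e^{it}\sin s$ with $0<s<\pi/2$; the pullback works out to $-i\sin^{3}s\,\cos s\,\di s\wedge \di t$, which is nonzero (e.g.\ at $s=\pi/4$). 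Because $v_{1}\bar{v}_{2}\,\di\bar{v}_{1}\wedge \di v_{2}$ has grade zero in the grading $\|v_{i}\|=1,\ \|\bar{v}_{i}\|=-1$ it is $U_{1}$-invariant and descends to a nonzero form on $\C\mathbb{P}^{1}$, hence the correction is nonzero. Thus $\phi$ is not a cochain map.

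The main obstacle is exactly this last step: the correction term is manifestly nonzero as a formal expression in the $v_{i},\bar{v}_{i}$, but one must rule out that the quotient relations on $\Omega^{\bullet}(\C\mathbb{P}^{n-1})$ kill it. The $U_{1}$-equivariant pullback to the sphere handles this cleanly, after which the rest is substitution.
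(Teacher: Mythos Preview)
Your proof is correct and follows essentially the same approach as the paper's own argument: both compute the correction term from Proposition \ref{prop3} using the explicit formulae from Proposition \ref{prop9} and (\ref{eqn39}), arriving at $v_{a}\bar{v}_{t}\,\di\bar{v}_{b}\wedge \di v_{r}$ (you specialise to $a=b=1$, $r=t=2$), and then observe this lies in $\Omega^{1}_{\mathrm{uni}}(M_{n})$ when $b\neq r$. The only difference is that the paper simply asserts the resulting $2$-form is nonzero, whereas you supply the explicit verification via a local section of $S^{3}\to\C\mathbb{P}^{1}$; this extra care is justified and does no harm.
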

\begin{proof}
	Using Proposition \ref{prop9} and (\ref{eqn39}) we evaluate the last term in (\ref{eqn44})
	\begin{align*}
		(\langle , \rangle \tens \id )\big( \overline{e_{1\tens 1} }  \tens S_{E} (E_{ab} \tens E_{rt} \tens e_{1\tens 1})\big)&= (\langle , \rangle \tens \id ) \big(\overline{ h_{k}\tens \bar{v}_{k} } \, S_{E}\tens (E_{ab}\tens E_{rt} \tens h_{i}\tens \bar{v}_{i})\big)\cr
		&= \langle \overline{ h_{k}\tens \bar{v}_{k} } , h_{a}\tens \bar{v}_{q}\rangle \delta_{ti} \,X^{bq}{}_{ri}\cr
		&= v_{a} \bar{v}_{q} \delta_{q t} \di \bar{v}_{b} \wedge \di v_{r} = v_{a} \bar{v}_{t} \di \bar{v}_{b} \wedge \di v_{r}\ ,
	\end{align*}
which is nonzero. Now if $ b \neq r $ then $ E_{ab} \tens E_{rt} \in \Omega^{1}_{\mathrm{uni}} (M_{n})$.
\end{proof}
This may seen disappointing, but it is an opportunity to consider the holomorphic structure or projective space. From Definition \ref{de3} and using  (\ref{eqn9}) we see that $ E=\mathrm{Col}^{n} (\C) \tens C_{-1}(\C \mathbb{P}^{n-1}) $ with the connection in Section \ref{Se2} is a holomorphic bundle over $ \C \mathbb{P}^{n-1} $.
\begin{theorem}\label{thm2}
		For the $ \bar{\partial} $ calculus on $ \C \mathbb{P}^{n-1} $ and  the universal calculus on $ M_{n} $ the state evaluation map (\ref{eqn41}) and its extension to forms in Corollary \ref{cor4} is a cochain map.
\end{theorem}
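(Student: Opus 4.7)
The plan is to apply the general correction formula in Proposition \ref{prop3} to the bimodule $E$ but with the $\bar\partial$-calculus on $\C\mathbb{P}^{n-1}$ substituted for the full de Rham calculus. Concretely, one sets $\bar\partial_E=(\id\tens\pi^{0,1})\nabla_E$ and $\bar\sigma_E=(\id\tens\pi^{0,1})\sigma_E$, viewing $E$ as an $M_n$--$C^{\infty}(\C\mathbb{P}^{n-1})$ bimodule with differential graded structure $(\Omega^{0,*},\bar\partial)$ on the right. Under the identification of Section \ref{subsec2}, the projection of the extension $\hat\sigma_E$ onto $\Omega^{0,m-1}$ is given by replacing every $\Gamma^{pq}{}_{ij}$ in Proposition \ref{prop5} by $\bar\Gamma^{pq}{}_{ij}:=\pi^{0,1}\Gamma^{pq}{}_{ij}$, and similarly the extended state evaluation map becomes the projection of the formula in Corollary \ref{cor4}.

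First I would verify the hypotheses of Proposition \ref{prop3} in this new setting. The bimodule connection axioms for $(\bar\partial_E,\bar\sigma_E)$ follow by projecting those for $(\nabla_E,\sigma_E)$ onto bidegree $(0,1)$, and the inner product preservation condition (\ref{eqn41}) for $\bar\partial_E$ is the $(0,1)$-component of the preservation condition for $\nabla_E$, since $\langle,\rangle$ is $C^\infty(\C\mathbb{P}^{n-1})$-valued and the bidegree decomposition of $\di\langle,\rangle$ separates the two halves. Extendability of $\bar\sigma_E$ is automatic by Corollary \ref{cor2}(b) because we keep the universal calculus on $M_n$. Finally $\bar\partial_E(e_{1\tens 1})=0$ is inherited from $\nabla_E(e_{1\tens 1})=0$ in (\ref{eqn12}).

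The crucial step is to show that the correction term $(\langle,\rangle\tens\id)(\bar e\tens\bar S_E(\xi\tens e))$ in the analogue of (\ref{eqn44}) vanishes. By (\ref{eqn36}), $\bar S_E(\di a\tens e)=\bar R_E(ae)-a\,\bar R_E(e)$, where $\bar R_E$ is the $\bar\partial$-curvature of $E$. But (\ref{eqn39}) exhibits $X^{pq}{}_{ij}=\delta_{qj}\,\di\bar v_p\wedge\di v_i$ as a pure $(1,1)$-form, so its $(0,2)$-projection vanishes; hence $\bar R_E=0$ (this is the assertion made in the paragraph just before the theorem that $E$ is a holomorphic bundle) and $\bar S_E$ is zero on $\Omega^1_{\mathrm{uni}}(M_n)\tens E$. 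The product rule (\ref{eqn35}) then propagates the vanishing to all degrees; equivalently, by Proposition \ref{prop9} the extension $\hat S_E$ is a sum of wedge products each containing exactly one $X$ factor, which lives in $\Omega^{1,1}$, so any such term projects to zero in $\Omega^{0,m}$.

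The main obstacle I expect is the bookkeeping in the first paragraph: confirming that bimodule connection, extendability, inner product preservation, and the vanishing on $e_{1\tens1}$ all transfer cleanly from the de Rham setup of Sections \ref{Se2}--\ref{Se3} to the $\bar\partial$-projected setup, and that the extension $\hat{\bar\sigma}_E$ produced by this projection coincides with the restriction of $\hat\sigma_E$ to $\Omega^{0,m-1}$. Once this is in place, the vanishing of $\bar S_E$ feeds into Proposition \ref{prop3} and yields $\bar\partial\bar\phi(\xi)=\bar\phi(\di_{\mathrm{uni}}\xi)$ for all $\xi\in\Omega^{m-1}_{\mathrm{uni}}(M_n)$, which is the cochain property claimed in the theorem.
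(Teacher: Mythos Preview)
Your proposal is correct and follows essentially the same route as the paper: apply Proposition~\ref{prop3} in the $\bar\partial$-setting and observe via Proposition~\ref{prop9} and (\ref{eqn39}) that every term of $\hat S_E$ contains one factor $X^{pq}{}_{ij}\in\Omega^{1,1}$, so the correction projects to zero in $\Omega^{0,*}$. Your careful verification that the hypotheses of Proposition~\ref{prop3} (bimodule connection, inner-product preservation, extendability, $\bar\partial_E e_{1\otimes1}=0$) survive the projection to the $\bar\partial$-calculus is exactly the bookkeeping the paper's two-line proof leaves implicit.
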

\begin{proof}
	Proposition \ref{prop3} will give the result if the $ S_{E} $ then gives zero in the $ \bar{\partial} $ calculus. This can be seen from Proposition \ref{prop9} and (\ref{eqn39}).
\end{proof}
Using the $ \bar{\partial} $ calculus on $ \C \mathbb{P}^{n-1} $ raises the possibility that the bimodule $E=\mathrm{Col}^{n} (\C) \tens C_{-1}(\C \mathbb{P}^{n-1}) $ could be use to give a functor from $ M_{n} $ modules on $ \C \mathbb{P}^{n-1} $. First we need to consider $ M_{n} $ modules with connection.
\subsection{Connections on right modules over $ M_{n}(\C) $}\label{subsec4}
In this subsection and the next we take $ r_{i}\in \mathrm{Row}^{n}(\C) $ to be the row vector with $ 1 $ in position $ i $ and zero elsewhere. 
\begin{proposition} \label{prop8}
Take the right $ M_{n}(\C) $ module $ F=V \tens \mathrm{Row}^{n}(\C) $ for  a vector space $ V $, with action given by matrix multiplication
\begin{align*}
	(v\tens r_{i}) \triangleleft E_{j k}= v\tens r_{k} \delta_{i j}\ .
\end{align*}
Then a general right connection $ \nabla_{F} $ for the universal calculus on $ M_{n} $ is  
\begin{align*}
	\nabla_{F}(v\tens r_{i}) \in V\tens \mathrm{Row}^{n}(\C)\tens_{M_{n} } \Omega_{\mathrm{uni}}^{1}(M_{n}) \subset  V\tens \mathrm{Row}^{n}(\C) \tens_{M_{n}} M_{n} \tens M_{n} 
\end{align*}
and using the fact that every $ 1 $-form on $ M_{n}  $ can be written as a sum of $ E_{sj}. \di E_{p i} $  we can write
\begin{equation}\label{eqn42}
	\nabla_{F}(v  \tens r_{i})= \sum_{ pj} L_{j p}(v)\tens r_{j} \tens \di E_{p i}
\end{equation}
for linear $ L_{j p}: V \rightarrow V $ with $ \sum _{j} L_{j j}(v)=v $. 
 The curvature of the connection is 
\begin{align*}
	R_{F}(v\tens r_{i})=\sum_{a b j p} L_{a b}\big(L_{j p}(v)\big) \tens r_{a}\tens \di E_{b j}\wedge \di E _{p i}\ .
\end{align*}
\end{proposition}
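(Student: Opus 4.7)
The plan is to derive the form (\ref{eqn42}) by a reduction argument inside $F \otimes_{M_n} \Omega^1_{\mathrm{uni}}(M_n)$, impose the Leibniz rule to extract the trace condition $\sum_j L_{jj}(v) = v$, and then read off the curvature directly from (\ref{eqn42}). The starting observation is that every $1$-form for the universal calculus is a finite sum $\sum a_{\alpha} \di b_{\alpha}$, and on expanding $a,b$ in matrix units one obtains $\{E_{sj}\di E_{pi}\}$ as a spanning family. Inside $F \otimes_{M_n} \Omega^1_{\mathrm{uni}}$ the identity $r_k E_{sj} = \delta_{ks} r_j$ lets me push the leftmost matrix factor across the tensor product over $M_n$, so any element takes the reduced shape $\sum w_{jpi'} \otimes r_j \otimes \di E_{pi'}$, modulo the single relation $\sum_p r_j \otimes \di E_{pp} = 0$ inherited from $\di(1)=0$.

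Next I would write $\nabla_F(v \otimes r_i)$ in this general reduced form and feed in the Leibniz rule. Applied to $(v \otimes r_i) E_{kl} = \delta_{ik}\,v \otimes r_l$ with $k \neq i$ it gives $\nabla_F(v \otimes r_i) \cdot E_{kl} = -\,v \otimes r_i \otimes \di E_{kl}$. Expanding the left side using $\di E_{pi'} \cdot E_{kl} = \delta_{i'k}\di E_{pl} - E_{pi'}\di E_{kl}$ forces the off-diagonal coefficients (those with $p \neq i'$ and $i' \neq i$) to vanish and shows that the surviving ``diagonal'' coefficients with $p = i' \neq i$ are independent of $i'$. Using $\sum_p \di E_{pp}=0$ to rewrite $\sum_{i' \neq i} \di E_{i'i'} = -\di E_{ii}$ then absorbs these diagonal leftovers into the $i' = i$ terms, leaving precisely $\sum_{pj} L_{jp}(v) \otimes r_j \otimes \di E_{pi}$. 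A separate short calculation applying Leibniz to $(v \otimes r_1) E_{1i} = v \otimes r_i$, and using the just-derived form to propagate from $i=1$ to arbitrary $i$, shows that consistency demands $\sum_j L_{jj}(v) = v$ and that the same $L_{jp}$ (independent of $i$) parametrizes $\nabla_F$ at every $i$.

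For the curvature, since $\di(\di E_{pi})=0$ the first piece of $R_F = (\id\tens\di + \nabla_F \wedge \id)\nabla_F$ drops out, and applying $\nabla_F$ to each $L_{jp}(v) \otimes r_j$ via (\ref{eqn42}) and wedging with $\di E_{pi}$ reproduces the claimed double sum $\sum L_{ab}(L_{jp}(v)) \otimes r_a \otimes \di E_{bj} \wedge \di E_{pi}$.

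The main obstacle is the middle step: the Leibniz constraints do not directly annihilate every coefficient with $i' \neq i$, because the diagonal coefficients $L^{(i)}_{j,p,p}$ (with $p=i' \neq i$) evade the off-diagonal vanishing condition. Routing these through the single relation $\sum_p \di E_{pp}=0$, and then checking that the $L_{jp}$ obtained at one value of $i$ reproduces the formula at every other $i$, is where the bookkeeping in the argument concentrates.
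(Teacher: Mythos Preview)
Your argument is correct, though it routes through a different parametrisation than the paper. The paper first invokes the isomorphism $\mathrm{Row}^n(\C)\tens_{M_n} M_n\cong \mathrm{Row}^n(\C)$ to identify $F\tens_{M_n}\Omega^1_{\mathrm{uni}}(M_n)$ with $V\tens K$, where $K=\ker(\cdot:\mathrm{Row}^n(\C)\tens M_n\to\mathrm{Row}^n(\C))$, and writes $\nabla_F(v\tens r_i)=S_{ijpq}(v)\tens r_j\tens E_{pq}$ against the genuinely independent set $\{r_j\tens E_{pq}\}$ subject only to the kernel condition $\sum_j S_{ijjq}=0$. The Leibniz rule is then a single linear equation in $V\tens\mathrm{Row}^n\tens M_n$ whose general solution $S_{ijpq}(v)=-v\,\delta_{ij}\delta_{pq}+\delta_{iq}L_{jp}(v)$ drops out immediately, and the first summand vanishes against $\sum_p\di E_{pp}=0$. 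You instead stay with the spanning set $\{r_j\tens\di E_{pi'}\}$ and carry the relation $\sum_p\di E_{pp}=0$ by hand, which forces the two-stage analysis (first $k\neq i$ to kill off-diagonal and equalise diagonal coefficients, then $E_{1i}$ and $E_{ii}$ to extract the trace condition and $i$-independence). Both arrive at the same place; the paper's choice buys a cleaner linear system with no relation bookkeeping, while yours keeps the differential-form language throughout and makes the curvature computation transparent. One small point: your trace condition $\sum_j L_{jj}(v)=v$ is most directly obtained from Leibniz applied to $(v\tens r_i)E_{ii}=v\tens r_i$ rather than to $(v\tens r_1)E_{1i}$; the latter by itself gives $L^{(i)}_{jp}=L^{(1)}_{jp}$ only once the trace condition is already in hand.
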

\begin{proof}
By using the $\mathrm{Row}^{n}(\C) \tens_{M_{n} } M_{n} \cong   \mathrm{Row}^{n}  (\C)$ we get 
\begin{align*}
		(V \tens \mathrm{Row}^{n}(\C) \tens_{M_{n} } \Omega_{\mathrm{uni}}^{1}(M_{n})\cong V\tens K 
\end{align*}
where $ K=\ker \cdot :  \mathrm{Row}^{n} (\C) \tens M_{n} \rightarrow \mathrm{Row}^{n} (\C)$. We write summing over $ j , p, q $
\begin{align*}
	\nabla_{F}(v\tens r_{i}) =S_{i j p q} (v)\tens r_{j} \tens E_{p q}
\end{align*}
and for this to be in $ V\tens K $ we need $ S_{i j p q} (v)\tens \delta_{j p} r_{q}=0 $ i.e. $ \sum_{j}  S_{i j j q}=0 $ for all $ i,q $. We will also write
\begin{align*}
	\nabla _{F} (v\tens r_{i}) = S_{i j p q}(v) \tens r_{j} \tens \di E _{p q} \ .
\end{align*}
and these are the same under the isomorphism as 
\begin{align*}
	 S_{i j p q}(v)\tens r_{j}(I_{n}\tens E_{p q}-E_{p q}\tens I)=S_{i j p q}(v)\tens r_{j}\tens  E_{p q}- S_{i j j q}(v)\tens r_{q}\tens I\ .
\end{align*}
The condition to be a right connection is, for all $ i, s, t $
\begin{align*}
	\nabla_{F}(v\tens r_{i} E_{st})= \nabla (v\tens r_{i}) E_{s t}+v \tens r_{i} \tens \di E_{s t}
\end{align*}
which gives, summing over $ j, p,q $ 
\begin{align*}
	\delta_{ i s} S_{t j p q} (v)\tens r_{j}\tens E_{p q}= S_{i j p q}(v) \tens r_{j} \tens E_{p q} E_{s t}+v \tens r_{i} \tens E_{s t}- \delta_{ i s} v \tens r_{t}\tens I\ .
\end{align*}
This has general solution 
\begin{align*}
	S_{i j p q} (v)=-v \delta_{i j} \delta_{p q}+\delta_{i q} L_{j p} (v) 
\end{align*}
where $ \sum_{j} L_{j j}(v)=v$. 
\end{proof}
If we take ${} _{M_{n}}\mathcal{M}  $ to be the category of  left $ M_{n} $ modules and module maps, then there is a functor ${} _{M_{n}}\mathcal{M} \rightarrow \mathcal{E}_{M_{n}}$ to the category of right $ M_{n} $ modules with right connections for the universal calculus. This is given by $ V \mapsto V \tens \mathrm{Row}^{n}(\C) $ and this is given the connection in Proposition \ref{prop8}, where we define $ L_{i j}(v) $ by the right action $ E_{i j}\triangleright v=L_{ij}(v) $. The condition $ \sum_{ j} L_{jj}(v)=v $ is simply $ I_{n} \triangleright v =v$. Note that this will not give the most general $ L_{i j} $ for Proposition \ref{prop8}, but the restriction to certain $ L_{i j} $ is what we need in the next part.
\subsection{Induced Holomorphic bundles on $ \C \mathbb{P}^{n-1} $}
From Proposition \ref{prop4} we know that there is a functor $ \tens E $ from $ \mathcal{E}_{M_{n}} $ to $ \mathcal{E}_{C(\C \mathbb{P}^{n-1})} $. At the end of the last section we had a functor from ${} _{M_{n}}\mathcal{M}  $ to $ \mathcal{E}_{M_{n}} $, and of course these can be composed. However we know that the state evaluation map $ \phi $ is not a cochain map for the ordinary calculus on $ \C \mathbb{P}^{n-1} $ (using the choice of connection in Section \ref{sub1}), but it is for the $ \bar{\partial} $ calculus. It is then natural to ask if we get a functor into holomorphic bundles on $ \C \mathbb{P}^{n-1} $. We use $ \pi^{i,j} $ for the projection from $ \Omega^{i+j} $ to $ \Omega^{i,j} $. 
 
Given a connection for the calculus $ \Omega ^{n}(\C\mathbb{P}^{n-1}) $ we can get a $ \bar{\partial}  $ connection (see Section \ref{subsec2} ) simply by composing with $ \pi^{0,1}$. Then to have $ F \tens_{M_{n}} E $ being a homomorphic bimodule we require that the $ \Omega^{0,2} $ part of its curvature $ R_{F \tens E} $ vanishes.
\begin{proposition}\label{prop10}
The $ \Omega^{0,2} $ component of the curvature of $ F\tens_{M_{n}} E $ is
\begin{align*}
	&(\id \tens \pi^{0,2}) R_{F\tens E} (v \tens r_{t} \tens h_{i}\tens v_{j})\cr
	&=\sum_{ a b s }L_{ca}L_{bs}(v)\tens \bar{v}_{j} \tens \delta_{ti}   v_{a} v_{s} \tens \di \bar{v}_{g}\wedge \di \bar{v}_{b}	-\sum_{ a }	L_{ca} (v)\tens \bar{v}_{j} \tens v_{a} v_{i} \tens \di \bar{v}_{g}\wedge \di \bar{v}_{t}	\cr
	&\quad + \sum_{ a b}	L_{cg }L_{ba}(v) \tens \bar{v}_{j} \tens v_{a} v_{i}\tens \di \bar{v}_{b}\wedge \di \bar{v}_{t}	
\end{align*}
and in particular if $ L_{c g} L_{e s }= \delta_{ g e } L_{c s}$ then $ (\id \tens \pi^{0,2}) R_{F\tens E}=0 $. 
\end{proposition}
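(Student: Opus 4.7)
The plan is to apply Proposition \ref{prop7} to the tensor product $F \tens_{M_n} E$, which writes
\[ R_{F\tens E} = \id \tens R_E + (\id \tens \sigma_E)(R_F \tens \id) + (\id \tens S_E)(\nabla_F \tens \id), \]
and then compose with $\id \tens \pi^{0,2}$, identifying which pieces survive and which vanish.

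Two of the three pieces vanish after projection. By (\ref{eqn9}) the bundle curvature $R_E$ outputs only $\di \bar v_p \wedge \di v_i$, which is of type $(1,1)$, so $\id \tens R_E$ is killed by $\pi^{0,2}$. Since $\nabla_F$ takes values in $\Omega^1_{\mathrm{uni}}(M_n)$, the third term only invokes the base case of $\hat S_E$ from Proposition \ref{prop9}, which by (\ref{eqn39}) produces $X^{bq}{}_{rj} = \delta_{qj}\di \bar v_b \wedge \di v_r$, again $(1,1)$; so it too is killed. Hence everything comes from the middle term.

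To compute it I would substitute $R_F$ from Proposition \ref{prop8} and the extension of $\hat\sigma_E$ to $\Omega^2_{\mathrm{uni}}(M_n)$ from Proposition \ref{prop5} at $m=3$, which produces wedges of the shape $\Gamma^{\cdot\cdot}{}_{\cdot q} \wedge \Gamma^{\cdot q}{}_{\cdot\cdot}$ with a summed middle index $q$. From (\ref{eqn15}),
\[ \pi^{0,1}\Gamma^{pq}{}_{rs} = v_q\bigl(\delta_{pr}\di \bar v_s - v_r \bar v_s \di \bar v_p\bigr), \]
so $\pi^{0,2}(\Gamma \wedge \Gamma)$ expands into four $q$-summed wedges. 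Two of them contain the factor $\sum_q v_q \di \bar v_q$, which vanishes by the relation derived in Section \ref{subsec1}; the remaining two collapse via $\sum_q v_q \bar v_q = 1$. Finally, passing $r_a \tens_{M_n} h_m = \delta_{am}$ through the tensor and using the trace identity $\sum_\beta L_{\beta\beta}(w) = w$ from Proposition \ref{prop8} to eliminate internal contractions produces the three stated summands. The main technical burden is purely the index bookkeeping, since the four contributions of $\hat\sigma_E$ each produce four wedge terms that must be tracked with the correct signs and surviving Kronecker deltas.

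For the final assertion, substitute $L_{cg}L_{es} = \delta_{ge}L_{cs}$ into the three summands. In the third summand the identity (with $e=b$, $s=a$) collapses $L_{cg}L_{ba}$ to $\delta_{gb}L_{ca}$, leaving $\sum_a L_{ca}(v) \tens \bar v_j \tens v_a v_i \tens \di \bar v_g \wedge \di \bar v_t$, which is exactly the negative of the second summand; those two cancel. In the first summand the identity (with $g=a$, $e=b$) collapses $L_{ca}L_{bs}$ to $\delta_{ab}L_{cs}$, leaving $\delta_{ti}\bigl(\sum_s L_{cs}(v)\, v_s\bigr)\, \di \bar v_g \wedge \bigl(\sum_b v_b \di \bar v_b\bigr)$, which vanishes because $\sum_b v_b \di \bar v_b = 0$ in $\Omega^1(\C\mathbb{P}^{n-1})$. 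Hence $(\id \tens \pi^{0,2})R_{F \tens E} = 0$ under the stated hypothesis.
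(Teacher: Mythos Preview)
Your proposal is correct and follows essentially the same route as the paper: split $R_{F\tens E}$ via Proposition~\ref{prop7}, discard the $R_E$ and $S_E$ pieces as type $(1,1)$ using (\ref{eqn9}) and (\ref{eqn39}), and compute the surviving middle term. The only cosmetic difference is that the paper evaluates $\sigma_E$ on $\di E_{ab}\wedge\di E_{st}$ by applying (\ref{eqn18}) twice rather than invoking Proposition~\ref{prop5} at $m=3$, and for the vanishing under $L_{cg}L_{es}=\delta_{ge}L_{cs}$ the paper substitutes into the unsimplified form (\ref{eqn21}) whereas you substitute directly into the three-term display; both lead to the same cancellations.
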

\begin{proof}
 From Proposition \ref{prop7} $ R_{F\tens E} $ splits into three bits, and the $ \id \tens R_{E} $ term does not have a $ \di \bar{v}_{i}\wedge \di \bar{v}_{j} $ part as computed in (\ref{eqn9}). By Proposition \ref{prop9} and equation (\ref{eqn39}) the last term in the formula for $ R_{F\tens E} $ in Proposition \ref{prop7} does not have a $ \Omega^{0,2} $ part either, so we are left with 
 \begin{equation*}
 		(\id \tens \pi^{0,2}) R_{F\tens E}=	(\id \tens \pi^{0,2})  \big((\id \tens \sigma_{E} ) (R_{F} \tens \id)\big)\ .
 \end{equation*}
    Using (\ref{eqn18}) twice we get
\begin{align}\label{eqn20}
	&(\id \tens 	\pi^{0,2})\sigma \big(\di E_{a b} \wedge \di E_{s t} \tens h_{i} \tens \bar{v}_{j}\big)\cr
	&=\sum_{p q g e} \big(\delta_{ti}  \delta_{sr} \delta_{bp} \delta_{a e} h_{g}-\delta_{ti}  \delta_{sr} \delta_{bg} \delta_{ep}h_{a}-\delta_{tp}  \delta_{ri} \delta_{bs} \delta_{a e} h_{g}+\delta_{tp}  \delta_{ri} \delta_{bg} \delta_{es}h_{a}\big)\tens \bar{v}_{f}\tens  	\pi^{0,2} (\Gamma^{gf}{}_{eq} \wedge \Gamma^{pq}{}_{rj} )\cr
		&=\sum_{p  g e f r} \big(\delta_{ti}  \delta_{sr} \delta_{bp} \delta_{a e} h_{g}-\delta_{ti}  \delta_{sr} \delta_{bg} \delta_{ep}h_{a}-\delta_{tp}  \delta_{ri} \delta_{bs} \delta_{a e} h_{g}+\delta_{tp}  \delta_{ri} \delta_{bg} \delta_{es}h_{a}\big)\cr
		&\tens \bar{v}_{f}\tens  v_{f} v_{e} \di \bar{v}_{g}\wedge (-\delta_{p r} \di \bar{v}_{j}+ \bar{v}_{j} v_{r} \di \bar{v}_{p})\cr
			&=\sum_{g} h_{g}\tens \bar{v}_{j}\tens \big( \delta_{ti} v_{a} v_{s} \di \bar{v}_{g}\wedge \di \bar{v}_{b} -\delta_{bs} v_{a} v_{i} \di \bar{v}_{g}\wedge \di \bar{v}_{t} + \delta_{ag} v_{s} v_{i} \di \bar{v}_{b}\wedge \di \bar{v}_{t}\big)\cr 
				&=\sum_{ge f} h_{g}\tens \bar{v}_{j}\tens \big( \delta_{ti} \delta_{eg} \delta_{bf} v_{a} v_{s}  -\delta_{bs} \delta_{eg} \delta_{tf} v_{a} v_{i}  + \delta_{ag} \delta_{be} \delta_{tf} v_{s} v_{i} \big) \di \bar{v}_{e}\wedge \di \bar{v}_{f}
\end{align}
taking only the $ \di \bar{v} \wedge \di \bar{v} $ component. 

We are left with, using (\ref{eqn20})  
\begin{align}\label{eqn21}
	(\id \tens \pi^{0,2}) R_{F\tens E}&=	(\id \tens \pi^{0,2})  \big((\id \tens \sigma_{E} ) (R_{F} \tens \id)\big) \cr
	&=	(\id \tens \pi^{0,2})  (\id \tens \sigma_{E} )\big(R_{F} (v\tens r_{t}) \tens h_{i} \tens \bar{v}_{j}\big)\cr 
	&=(\id \tens \pi^{0,2}) \big(L_{ca } L_{bs} (v)\tens r_{c} \tens \sigma_{E} (\di E_{a b} \wedge \di E_{st} \tens h_{i} \tens \bar{v}_{j})\big) \cr 
		&=\sum_{g e f a b s c } \big(L_{ca } L_{bs} (v)\tens r_{c} \tens h_{g}\tens \bar{v}_{j}\tens \cr 
		& \big( \delta_{ti} \delta_{eg} \delta_{bf} v_{a} v_{s}  -\delta_{bs} \delta_{eg} \delta_{tf} v_{a} v_{i}  + \delta_{ag} \delta_{be} \delta_{tf} v_{s} v_{i} \big) \di \bar{v}_{e}\wedge \di \bar{v}_{f} 
\end{align}
and for this to vanish we need for all $ t, i,j,g,c $
\begin{align}\label{eqn22}
&	\sum_{e f a b s } \big(L_{ca } L_{bs} (v)\tens \bar{v}_{j}\tens \big( \delta_{ti} \delta_{eg} \delta_{bf} v_{a} v_{s}  -\delta_{bs} \delta_{eg} \delta_{tf} v_{a} v_{i}  + \delta_{ag} \delta_{be} \delta_{tf} v_{s} v_{i} \big) \di \bar{v}_{e}\wedge \di \bar{v}_{f} =0\cr
&=\sum_{ a b s }L_{ca}L_{bs}(v)\tens \bar{v}_{j} \tens \delta_{ti}   v_{a} v_{s} \tens \di \bar{v}_{g}\wedge \di \bar{v}_{b}	-\sum_{ a }	L_{ca} (v)\tens \bar{v}_{j} \tens v_{a} v_{i} \tens \di \bar{v}_{g}\wedge \di \bar{v}_{t}	\cr
&\quad + \sum_{ a b}	L_{cg }L_{ba}(v) \tens \bar{v}_{j} \tens v_{a} v_{i}\tens \di \bar{v}_{b}\wedge \di \bar{v}_{t}	\ .
\end{align}
	If $ L_{c g} L_{e s }= \delta_{ g e } L_{c s}$ then the result of (\ref{eqn21}) is 
	\begin{align*}
			&\sum_{ e f a b s }  L_{cs} (v)\tens \delta_{a b}\big( \delta_{ti} \delta_{eg} \delta_{bf} v_{a} v_{s}  -\delta_{bs} \delta_{eg} \delta_{tf} v_{a} v_{i}  + \delta_{ag} \delta_{be} \delta_{tf} v_{s} v_{i} \big) \di \bar{v}_{e}\wedge \di \bar{v}_{f}\cr 
				&=\sum_{ e f a s }  L_{cs} (v)\tens \big( \delta_{ti} \delta_{eg} \delta_{af} v_{a} v_{s}  -\delta_{as} \delta_{eg} \delta_{tf} v_{a} v_{i}  + \delta_{ag} \delta_{ae} \delta_{tf} v_{s} v_{i} \big) \di \bar{v}_{e}\wedge \di \bar{v}_{f}\cr 
				&=\sum_{ f a s }  L_{cs} (v)\tens v_{s}  \big( \delta_{ti} \delta_{af} v_{a}   -\delta_{as}  \delta_{tf}  v_{i}  + \delta_{ag}  \delta_{tf}  v_{i} \big) \di \bar{v}_{g}\wedge \di \bar{v}_{f}\cr 
					&=\delta_{ti}\sum_{ f s }  L_{cs} (v)\tens v_{s} v_{f} \di \bar{v}_{g}\wedge \di \bar{v}_{f}=0\ .
	\end{align*}
\end{proof}
Note that the conditions $ \sum_{ i } L_{ii} (v)=v $ and that in Proposition \ref{prop10} correspond to $ L_{i j} $ being the left action of the matrix unit $E_{i j}  $ in a representation of $ M_{n} (\C) $.  Set $ F=V \tens \mathrm{Row}^{n}(\C) $ as in Proposition \ref{prop8} then 
\begin{align*}
	F\tens_{M_{n} (\C)} E=V\tens  \mathrm{Row}^{n}(\C)  \tens_{M_{n} (\C)} \mathrm{Col}^{n}(\C) \tens C_{-1}  (\C\mathbb{P}^{n-1})\ .
\end{align*}
For $ w \in V $, using (\ref{eqn37}), Proposition (\ref{prop8}) and (\ref{eqn18})
\begin{align*}\label{eqn40}
		\nabla_{F \tens E} (w\tens r_{a} \tens h_{i} \tens \bar{v}_{j})&=(\id \tens \sigma_{E})\big(\nabla_{F} (w\tens r_{a})\tens (h_{i} \tens \bar{v}_{j})+w \tens r_{a} \tens \nabla_{E} (h_{i} \tens \bar{v}_{j}) \big)\cr 
	&= L_{ps}(w) \tens r_{p}\tens \sigma_{E} (\di E_{s a}\tens h_{i} \tens \bar{v}_{j})+w \tens r_{a} \tens   h_{p} \tens \bar{v}_{q} \tens  \Gamma^{pq}{}_{ij}\cr 
	&= L_{ps}(w) \tens r_{p}\tens (\delta_{ ai }h_{t} \delta_{ sr} -\delta_{ at} h_{s} \delta_{ ri} )\tens \bar{v}_{q} \tens  \Gamma^{pq}{}_{ij}+w \tens r_{a} \tens h_{p}\tens \bar{v}_{q} \tens  \Gamma^{pq}{}_{ij}\cr 
	&=\delta_{ ai } L_{pr} (w)\tens r_{p} \tens h_{t} \bar{v}_{q} \tens  \Gamma^{tq}{}_{rj} +w \tens r_{a} \tens h_{p} \tens \bar{v}_{q} \tens  \Gamma^{pq}{}_{ij}\cr
	&\quad - L_{ps}(w)\tens r_{p} \tens h_{s} \tens \bar{v}_{q} \tens  \Gamma^{aq}{}_{ij} \cr 
	&=\big(\delta_{ ai } L_{pr}(w)\tens r_{p } \tens h_{t}+w \tens r_{a}\tens h_{t} \delta_{ ri }-\delta_{ ri} \delta_{ ta}L_{ps}(w) \tens r_{p}\tens h_{s}\big) \tens \bar{v}_{q} \tens  \Gamma^{tq}{}_{rj} 
\end{align*}
Note $ \mathrm{Row}^{n} (\C) \tens_{M_{n}} \mathrm{Col}^{n}(\C)\cong \C$ by $ r_{a} \tens h_{i} \mapsto \delta_{ ai } \in \C$. Look at the last two terms of the last line of (\ref{eqn40}) using this isomorphism 
\begin{align*}
	\big(w\, \delta_{ at} \delta_{ ri}-\delta_{ ri} \delta_{ ta} L_{ps}(w)\delta_{ps}\big) \tens \bar{v}_{q} \tens  \Gamma^{tq}{}_{rj} =\big(w\, \delta_{ at} - \delta_{ ta} \delta_{ ps}  L_{ps}(w)\big) \tens \bar{v}_{q} \tens  \Gamma^{tq}{}_{ij} =0
\end{align*}
by Proposition (\ref{prop8}). Thus we can use the isomorphism to give a connection on $ F \tens_{M_{n}} E  \cong V \tens C_{-1}(\C \mathbb{P}^{n-1}) $ given by 
\begin{align*}
	\nabla (w \tens \bar{v}_{j})= L_{pr} (w)\tens \bar{v}_{q} \tens  \Gamma^{pq}{}_{rj} \ .
\end{align*}
\begin{cor} \label{cor3}
	For the special case of the connection in (\ref{eqn15}) we find 
	\begin{align}
	\nabla (w \tens \bar{v}_{j})&= L_{pr} (w)\tens \bar{v}_{q} \tens v_{q} \big(\delta_{ pr} \di\bar{v}_{j}  -\bar{v}_{j} v_{r} \di \bar{v}_{p} +\bar{v}_{j} \bar{v}_{p} \di v_{r}\big)\cr
&= w \tens \bar{v}_{q} \tens v_{q}	\di \bar{v}_{j}+ L_{pr} (w)\tens \bar{v}_{q} \tens v_{q} \bar{v}_{j} (\bar{v}_{p} \di \bar{v}_{r}- v_{r} \di \bar{v}_{p})\cr 
&= w \tens \bar{v}_{q} \tens v_{q}	\di \bar{v}_{j}+ L_{pr} (w)\tens \bar{v}_{j} (\bar{v}_{p} \di \bar{v}_{r}- v_{r} \di \bar{v}_{p})
	\end{align}
and this splits into a $ \partial $ and a $ \bar{\partial }$ connection
\begin{align}\label{eqn43}
		\partial_{V} (w \tens \bar{v}_{j})&= L_{pr} (w)\tens \bar{v}_{j} \tens \bar{v}_{p} \di v_{r}\cr
	\bar{\partial}_{V}(w \tens \bar{v}_{j}) &= w \tens \bar{v}_{q}  \tens  v_{q} \di \bar{v}_{j}-L_{pr} (w)\tens \bar{v}_{j} \tens v_{r} \di\bar {v}_{p}\ .
\end{align}
\end{cor}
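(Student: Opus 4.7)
My plan is to prove Corollary \ref{cor3} by direct substitution, followed by algebraic simplification, followed by projection onto bidegree components. The starting point is the general formula $\nabla(w \tens \bar{v}_j) = L_{pr}(w) \tens \bar{v}_q \tens \Gamma^{pq}{}_{rj}$ obtained in the discussion just before the corollary, valid for any connection of the form (\ref{eqn37}). I simply substitute the specific connection coefficients $\Gamma^{pq}{}_{rs} = v_q(\delta_{pr}\di\bar{v}_s - \bar{v}_s v_r \di\bar{v}_p + \bar{v}_s \bar{v}_p \di v_r)$ from (\ref{eqn15}) with $s = j$, which immediately produces the first displayed line of the corollary.

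To pass to the second line I use two simplifications in succession. First, the $\delta_{pr}$ piece together with the normalization $\sum_p L_{pp}(w) = w$ from Proposition \ref{prop8} collapses that summand to $w \tens \bar{v}_q \tens v_q \di\bar{v}_j$. Second, in the remaining two summands the scalar $v_q \bar{v}_j$ is of grade zero and hence lies in $C^\infty(\C \mathbb{P}^{n-1})$, so it may be moved to the left across the tensor product over $C^\infty(\C \mathbb{P}^{n-1})$; once it is on the section side, the sphere relation $\sum_q v_q \bar{v}_q = 1$ from Section \ref{subsec1} collapses the sum over $q$, giving $L_{pr}(w) \tens \bar{v}_j \tens (\bar{v}_p \di v_r - v_r \di \bar{v}_p)$.

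The final step, producing the formulas in (\ref{eqn43}), is to apply the projections $\pi^{1,0}$ and $\pi^{0,1}$ from Section \ref{subsec2}. Since $\di v_r \in \Omega^{1,0}(\C \mathbb{P}^{n-1})$ while $\di\bar{v}_p, \di\bar{v}_j \in \Omega^{0,1}(\C \mathbb{P}^{n-1})$, the $(1,0)$ part picks out $L_{pr}(w) \tens \bar{v}_j \tens \bar{v}_p \di v_r$ and the $(0,1)$ part picks out the remaining two terms, matching the stated expressions for $\partial_V$ and $\bar\partial_V$. The only point requiring any care is the grade bookkeeping in step two: one must check that $v_q \bar{v}_j$ (but not $v_q \di\bar{v}_j$) can be transported across the tensor product, which is why the first summand remains in its original form in the middle displayed line. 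No new ideas are needed beyond the identities and conventions already assembled in Sections \ref{subsec1} and \ref{sub1}.
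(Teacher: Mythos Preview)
Your proposal is correct and matches the paper's own treatment: the corollary is stated as a displayed computation with no separate proof environment, and your explanation supplies exactly the justifications for each line---substituting (\ref{eqn15}), using $\sum_p L_{pp}(w)=w$ from Proposition~\ref{prop8} on the $\delta_{pr}$ term, transporting the grade-zero scalar $v_q\bar v_j$ across the $C^\infty(\C\mathbb{P}^{n-1})$-tensor and collapsing via $\sum_q \bar v_q v_q=1$, and finally projecting by bidegree. Your remark that $v_q\,\di\bar v_j$ cannot be so transported (only grade-zero \emph{functions} cross the tensor) is the right reason the first summand survives unchanged; note also that the paper's displayed lines~2--3 contain a typographical slip ($\di\bar v_r$ for $\di v_r$), which your version silently corrects, consistently with (\ref{eqn43}).
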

\begin{proposition}\label{prop12}
	The composition of the given functor $ \tens E : \mathcal{E}_{M_{n}} \rightarrow \mathcal{E}_{C(\C \mathbb{P}^{n-1})} $ and the functor in Section \ref{subsec4} ${} _{M_{n}}\mathcal{M} \rightarrow \mathcal{E}_{M_{n}}$ gives a functor from $ {} _{M_{n}}\mathcal{M}  $ to holomorphic bundles on $ \C \mathbb{P}^{n-1} $. It is given by $ V $ mapping to $ V \tens C_{-1} (\C \mathbb{P}^{n-1}) $ with the $ \bar{\partial}_{V}$ connections given in  Corollary \ref{cor3}.
\end{proposition}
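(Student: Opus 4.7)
The plan is to assemble the result from the two functors already constructed and to use Proposition \ref{prop10} together with Corollary \ref{cor3} to check that the image lies in the holomorphic category. The composition of $V \mapsto (V\tens \mathrm{Row}^{n}(\C), \nabla_{F})$ from Section \ref{subsec4} with $\tens_{M_{n}} E$ from Proposition \ref{prop4} is automatically a functor from ${}_{M_{n}}\mathcal{M}$ to $\mathcal{E}_{C(\C\mathbb{P}^{n-1})}$, so what must be checked is (i) that the underlying right $C(\C\mathbb{P}^{n-1})$-module is $V\tens C_{-1}(\C\mathbb{P}^{n-1})$ with the specific connection of Corollary \ref{cor3}, (ii) that projecting onto $\Omega^{0,1}$ yields a $\bar{\partial}$-connection with vanishing holomorphic curvature, and (iii) that morphisms of left $M_{n}$-modules induce morphisms of the resulting holomorphic bundles.

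For (i), the isomorphism $\mathrm{Row}^{n}(\C) \tens_{M_{n}} \mathrm{Col}^{n}(\C) \cong \C$ sends $r_{a}\tens h_{i} \mapsto \delta_{ai}$, giving the underlying module isomorphism $F\tens_{M_{n}} E \cong V \tens C_{-1}(\C\mathbb{P}^{n-1})$. The computation of $\nabla_{F\tens E}$ displayed just before Corollary \ref{cor3} already shows that, under this isomorphism and the condition $\sum_{j} L_{jj}(v)=v$ from Proposition \ref{prop8}, the tensor product connection reduces to $\nabla(w\tens \bar{v}_{j}) = L_{pr}(w)\tens \bar{v}_{q}\tens \Gamma^{pq}{}_{rj}$. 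Specialising to the $\Gamma^{pq}{}_{rj}$ from (\ref{eqn15}) and splitting according to $\Omega^{1}=\Omega^{1,0}\oplus \Omega^{0,1}$ yields the explicit $\bar{\partial}_{V}$ of (\ref{eqn43}).

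For (ii), I start from the functor in Section \ref{subsec4}: it assigns to a left $M_{n}$-module $V$ the maps $L_{ij}(v) = E_{ij}\triangleright v$. Because $E_{cg} E_{es} = \delta_{ge}\,E_{cs}$ in $M_{n}(\C)$, these satisfy exactly the matrix-unit relations $L_{cg}L_{es} = \delta_{ge}\,L_{cs}$. This is the hypothesis of Proposition \ref{prop10}, whose conclusion is that the $\Omega^{0,2}$ part of $R_{F\tens E}$ vanishes. But the holomorphic curvature of the connection induced by $\bar{\partial}_{V} = (\id \tens \pi^{0,1})\nabla$ is by definition $(\id \tens \pi^{0,2}) R_{F\tens E}$, so the image object is holomorphic in the sense of Definition \ref{de3}. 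The main conceptual point, and the step I expect to require the most care, is recognising that the vanishing condition in Proposition \ref{prop10} is exactly the condition for $L$ to define a representation of $M_{n}(\C)$, so that the functor from Section \ref{subsec4} lands precisely inside the subclass of right connections for which the induced $\bar{\partial}$-connection has zero holomorphic curvature.

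For (iii), a left $M_{n}$-module map $T\colon V\to W$ induces $T\tens \id\colon V\tens \mathrm{Row}^{n}(\C) \to W\tens \mathrm{Row}^{n}(\C)$, and this is a morphism in $\mathcal{E}_{M_{n}}$ since $T$ intertwines the actions $L^{V}_{ij}$ and $L^{W}_{ij}$, hence commutes with $\nabla_{F}$ term-by-term in (\ref{eqn42}). Applying the functor $\tens_{M_{n}} E$ of Proposition \ref{prop4} then produces a morphism in $\mathcal{E}_{C(\C\mathbb{P}^{n-1})}$, and composing with the projection $\pi^{0,1}$ preserves the intertwining property with $\bar{\partial}$. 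This gives the claimed functor into holomorphic right modules over $\C\mathbb{P}^{n-1}$.
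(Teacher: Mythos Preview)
Your proposal is correct and follows essentially the same route as the paper: both rely on the preceding discussion (the identification $F\tens_{M_{n}}E\cong V\tens C_{-1}(\C\mathbb{P}^{n-1})$ and Proposition~\ref{prop10} with the matrix-unit relations $L_{cg}L_{es}=\delta_{ge}L_{cs}$) for the object-level part, and then verify functoriality on morphisms. The only minor difference is that for step (iii) the paper checks the commuting square directly from the explicit formula~(\ref{eqn43}), whereas you argue via the abstract functoriality of $\tens_{M_{n}}E$ followed by $\pi^{0,1}$; both arguments reduce to the fact that $T$ intertwines the $L_{pr}$.
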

\begin{proof}
The category of holomorphic bundles is given morphisms being module maps commutating with $ \bar{\partial} $ operators as in Subsection \ref{subsec3}. Most of this has been proved in the discussion previously. We explicitly check that we have a functor, i.e. that a $ M_{n} $ module map $ \theta: V \rightarrow Y $ gives a commutating diagram 
\begin{align*} 
	\UseComputerModernTips
	\xymatrix{
		V \tens C_{-1} (\C \mathbb{P}^{n-1}) 	 \ar[d]_{\theta \tens \id}  \ar[r]^{\bar{\partial}_{V}\qquad \qquad \qquad} & V \tens C_{-1} (\C \mathbb{P}^{n-1}) \tens_{C(\C \mathbb{P}^{n-1})} \Omega^{0,1} (\C \mathbb{P}^{n-1})\ar[d]_{\theta \tens \id \tens \id}
		\\
		Y\tens  C_{-1} (\C \mathbb{P}^{n-1})   \ar[r]_{\bar{\partial}_{Y}\qquad \qquad \qquad}  & Y\tens C_{-1} (\C \mathbb{P}^{n-1}) \tens_{C(\C \mathbb{P}^{n-1})} \Omega^{0,1} (\C \mathbb{P}^{n-1})
	}
\end{align*} 
which happens because the $ L_{pr} $ maps commute with $ \theta $ in the formula (\ref{eqn43}).
\end{proof}

\end{document}